\renewcommand{\leq}{\leqslant}
\renewcommand{\geq}{\geqslant}
\newtheorem{theorem}{Theorem}
\newtheorem{proposition}[theorem]{Proposition}
\newtheorem{corollary}[theorem]{Corollary}
\newtheorem{lemma}[theorem]{Lemma}
\newcommand{\Fq}{\mathbb{F}_q}
\newcommand{\cl}[1]{\operatorname{cl}(#1)}
\newcommand{\ord}[1]{\mathcal{O}(#1)}
\newcommand{\E}{E}
\newcommand{\EndE}{\operatorname{End}(\E)}
\newcommand{\Ellt}{\operatorname{Ell}_t(\Fq)}
\newcommand{\Elltu}{\operatorname{Ell}_{t,u}(\Fq)}
\newcommand{\OrdE}{\ord{\E}}
\newcommand{\Zpi}{\mathbb Z\left[\pi\right]}
\newcommand{\kro}[2]{\left(#1\mid #2\right)}
\newcommand{\LL}[1]{L\left[#1\right]}
\begin{document}

\title[Computing endomorphism rings]{Computing the endomorphism~ring of an ordinary elliptic~curve over a finite field}
\author{Gaetan Bisson and Andrew V. Sutherland}

\begin{abstract}
We present two algorithms to compute the endomorphism ring of an ordinary elliptic curve $\E$ defined over a finite field $\Fq$.
Under suitable heuristic assumptions, both have subexponential complexity.
We bound the complexity of the first algorithm in terms of $\log{q}$,
while our bound for the second algorithm depends primarily on $\log |D_\E|$,
where $D_\E$ is the discriminant of the order isomorphic to $\EndE$.
As a byproduct, our method yields a short certificate that may be used to verify that the endomorphism ring is as claimed.
\end{abstract}
\maketitle
\section{Introduction}

Let $\E$ be an ordinary elliptic curve defined over a finite field $\Fq$, and let $\pi$ denote the Frobenius endomorphism of $\E$.
We may view $\pi$ as an element of norm $q$ in the integer ring of some imaginary quadratic field $K=\mathbb Q\left(\sqrt{D_K}\right)$:
\begin{equation}\label{equation:frob}
\pi=\frac{t+v\sqrt{D_K}}{2}\text{\quad with\quad}4q=t^2-v^2D_K.
\end{equation}
The trace of $\pi$ may be computed as $t=q+1-\#\E$.  Applying Schoof's algorithm to count the points on $\E/\Fq$, this can be done in polynomial time \cite{schoof-pointcounting}.
The fundamental discriminant $D_K$ and the integer $v$ are then obtained by factoring $4q-t^2$, which can be
accomplished probabilistically in subexponential time \cite{lenstra-factoring}.

The endomorphism ring of $\E$ is isomorphic to an order $\OrdE$ of $K$.
Once $v$ and $D_K$ are known, there are only finitely many possibilities for $\OrdE$, since
\begin{equation}\label{equation:endring}
\Zpi\subseteq\OrdE\subseteq\mathcal O_K.
\end{equation}
Here $\Zpi$ denotes the order generated by $\pi$, with discriminant $D_\pi=v^2D_K$,
and $\mathcal O_K$ is the maximal order of $K$ (its ring of integers), with discriminant $D_K$.
The discriminant of $\OrdE$ is then of the form $D_\E=u^2D_K$, where the conductor $u$ divides $v$ and uniquely determines $\OrdE$.
We wish to compute $u$.

\medskip

Recall that two elliptic curves over $\Fq$ are isogenous if and only if they have the same trace \cite[Ch.~13, Thm.~8.4]{husemoller-ec}.
Thus the set $\Ellt$ of elliptic curves defined over $\Fq$ with trace $t$ constitutes an isogeny class.
Each curve in $\Ellt$ has an endomorphism ring satisfying (\ref{equation:endring}), and therefore a conductor dividing $v$.

In his seminal thesis, Kohel describes the structure of the graph of isogenies defined on $\Ellt$,
and its relationship to the orders in $\mathcal{O}_K$.
He applies this structure to obtain a deterministic algorithm to compute $u$ in time $O(q^{1/3+\epsilon})$,
assuming the generalized Riemann hypothesis (GRH) \cite[Thm.~24]{kohel-thesis}.

Here we present two new methods to compute $u$ that further exploit the relationship between the isogeny graph and ideal class groups.
Under heuristic assumptions (including, but not limited to, the GRH), we achieve subexponential running times.
Both methods yield \emph{Las Vegas} algorithms: probabilistic algorithms whose output is unconditionally correct.  We rely on heuristic assumptions only to bound their expected running times.

In practice we find the algorithms perform well, and are able to handle problem sizes that were previously intractable.
We give computational examples over finite fields of cryptographic size where $v$ is large and not smooth (the most difficult case).
Over a 200-bit field, for example, the total running time is typically under an hour (see Section~\ref{section:examples} for details).

To express our complexity bounds, we adopt the usual notation
\[\LL{\alpha,c}\left(x\right)=\exp\left(\left(c+o(1)\right)\left(\log x\right)^\alpha\left(\log\log x\right)^{1-\alpha}\right).\]
Under the heuristic assumptions detailed in Section~\ref{section:complexity}, we derive the bound
\[\LL{1/2,\sqrt{3}/2}(q)\]
for Algorithm~1 (Corollary~\ref{cor-alg1}), and the bound
\[\LL{1/2+o(1),1}(|D_\E|)\thickspace + \thickspace \LL{1/3,c_f}(q)\]
for Algorithm~2 (Proposition~\ref{prop-alg2}).
The $\LL{1/3,c_f}$ term reflects the heuristic complexity of
factoring $4q-t^2$ using the number field sieve \cite{blp-nfs}. 
Algorithm~2 is slower than Algorithm~1 in general, but may be much faster when $u\ll v$.

In certain cryptographic applications the discriminant $D_\E$ is an important security parameter (see \cite{bisson-satoh} for one example), and
it may be necessary for a third party to independently verify its value.  The algorithms we use to compute $D_\E$ may additionally generate a short \emph{certificate} to aid this verification.
Both certification and verification have heuristically subexponential running times, and one may extend the certification phase in order to reduce the
verification time, as discussed in Section~\ref{section:complexity}.  Under the same heuristic assumptions used in our complexity bounds, the size of the certificate is $O(\log^{2+\epsilon}q)$
(Corollary~\ref{cor-cert}).

\section{Preliminaries}\label{section:preliminaries}

Kohel's algorithm treats each large prime power $p^k$ dividing $v$ by computing the kernel of a certain \emph{smooth isogeny} of degree $n$.  The prime factors of $n$ are small (polynomial in $\log v$), but $n$ itself is large (exponential in $\log v$), and this leads to an exponential running time (see \cite[Lem.~29]{kohel-thesis}).  We replace this computation with a walk in the isogeny graph using isogenies of low degree (heuristically, subexponential in $\log v$).  This walk computes the cardinality of a certain \emph{smooth relation}, and by performing similar computations in class groups of orders in $\mathcal O_K$ we are able to determine the power of $p$ dividing $u$ (via Corollary \ref{cor-piff}).  We adapt an algorithm of McCurley \cite{mccurley} to efficiently find smooth relations, achieving a heuristically subexponential running time.
First, we present some necessary background.

\subsection{Theoretical background}

Let us fix an ordinary elliptic curve $\E$ defined over a finite field $\Fq$, with $t$, $D_K$, and $v$
as in (\ref{equation:frob}).
We may verify that $\E$ is ordinary by checking that $t$ is nonzero modulo the characteristic of $\Fq$ \cite[Prop.~4.31]{washington-ec}.

Recall that the $j$-invariant $j(\E)$ may be computed as a rational function of the coefficients of $\E$ and, in particular, is an element of $\Fq$.
Over the algebraic closure of $\Fq$, the $j$-invariant uniquely identifies $\E$ up to isomorphism, but this is not true over $\Fq$.
However, two ordinary elliptic curves with the same trace are isomorphic over $\Fq$ if and only if they have the same $j$-invariant \cite[Prop.~14.19]{cox-primes}.
Thus we may explicitly represent the set $\Ellt$ as a subset of $\Fq$, namely, the $j$-invariants of all elliptic curves
over $\Fq$ with trace $t$, and view each element of $\Ellt$ as a particular elliptic curve representing its isomorphism class.

As noted above, each curve in $\Ellt$ has an associated $u$ dividing $v$ that identifies its endomorphism ring,
and we may partition $\Ellt$ into subsets $\Elltu$ accordingly.
We aim to distinguish the particular subset containing $\E$ by identifying relations that hold in some $\Elltu$ but not others.

Our main tool is the action of the ideal class group $\cl{u^2D_K}$ of $\ord{u^2D_K}$
(the order of $K$ with conductor $u$) on the set ${\rm Ell}_{t,u}(\Fq)$.  Here we rely
on standard results from the theory of complex multiplication, and the Deuring lifting theorem.

\begin{theorem}\label{theorem:action}
With $q$, $t$, $v$, and $D_K$ as in $(1)$, let $u$ be a divisor of $v$ and $\mathfrak{a}$ an ideal of $\ord{u^2D_K}$ with prime norm $\ell$.
Then $\mathfrak{a}$ acts on the set $\Elltu$ via an isogeny of degree $\ell$, and this defines a faithful group action by $\cl{u^2D_K}$.
\end{theorem}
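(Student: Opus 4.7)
The plan is to lift the situation to characteristic zero via the Deuring lifting theorem, apply classical complex multiplication theory there, and then descend back to $\Fq$ by reduction modulo a suitable prime. The essential point is that the class group action preserves both the isogeny class (hence $t$) and the endomorphism ring (hence $u$).

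Fix $\E' \in \Elltu$, so $\operatorname{End}(\E') \cong \ord{u^2 D_K}$; since $\E'$ is ordinary, $p \nmid u$, and we may assume $\ell \ne p$ (required for $\mathfrak a$ to be invertible at the relevant prime). By the Deuring lifting theorem there exist a number field $L$ containing the ring class field of $\ord{u^2 D_K}$, a prime $\mathfrak p$ of $L$ above $p$, and an elliptic curve $\tilde\E/L$ with good reduction $\E'$ at $\mathfrak p$, such that reduction modulo $\mathfrak p$ induces an isomorphism $\operatorname{End}(\tilde\E) \xrightarrow{\sim} \operatorname{End}(\E')$. Over $L$, classical CM theory provides a simply transitive action of $\cl{u^2 D_K}$ on the set of $\overline L$-isomorphism classes of elliptic curves with endomorphism ring $\ord{u^2 D_K}$: an invertible ideal $\mathfrak a$ of prime norm $\ell$ sends $\tilde\E$ to $\tilde\E / \tilde\E[\mathfrak a]$, where $\tilde\E[\mathfrak a] = \bigcap_{\alpha \in \mathfrak a} \ker\alpha$ is étale of order $\ell$. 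Reducing modulo $\mathfrak p$ then yields an isogeny $\varphi_{\mathfrak a}: \E' \to \E''$ of degree $\ell$ defined over $\Fq$.

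It remains to show $\E'' \in \Elltu$ and that the induced map descends to a faithful action of $\cl{u^2 D_K}$. Since $\E''$ is isogenous to $\E'$ over $\Fq$, it has trace $t$; transporting Deuring's isomorphism along $\mathfrak a$ gives $\operatorname{End}(\E'') \cong \ord{u^2 D_K}$, so indeed $\E'' \in \Elltu$. Principal ideals $(\alpha)$ act trivially, as $\tilde\E/\tilde\E[(\alpha)] \simeq \tilde\E$ via the endomorphism $\alpha$; hence the ideal action factors through $\cl{u^2 D_K}$. Faithfulness is inherited from the simply transitive action in characteristic zero: if two classes $[\mathfrak a]$ and $[\mathfrak b]$ coincide after reduction, then their quotient stabilizes $\tilde\E$ by Deuring's isomorphism, forcing $[\mathfrak a \mathfrak b^{-1}]$ to be trivial in $\cl{u^2 D_K}$.

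The main obstacle is ensuring that reduction confines the action to the single stratum $\Elltu$, i.e., that $\operatorname{End}(\E'') \cong \ord{u^2 D_K}$ on the nose rather than some over- or suborder. This hinges on the compatibility of Deuring's bijection on endomorphism rings with taking the quotient by an invertible ideal of $\ord{u^2 D_K}$. It is also precisely what distinguishes this theorem from Kohel's more general isogeny-graph picture, in which isogenies of composite degree may cross between strata of different conductors.
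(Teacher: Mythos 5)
Your proposal follows essentially the same route as the references the paper cites for this theorem (Lang, Kohel, Cox, Silverman): Deuring lift, characteristic-zero CM theory, and reduction, with the principal-ideal and faithfulness arguments correctly sketched. One step worth spelling out more than you do is why $\E''$ and $\varphi_{\mathfrak a}$ are defined over $\Fq$ (so that the action really lands back in $\Elltu$ rather than merely producing a curve over $\overline{\Fq}$): this holds because the Frobenius $\pi$ lies in $\ord{u^2D_K}$ and commutes with every $\alpha\in\mathfrak a$, so the kernel $\E'[\mathfrak a]$ is stable under $\operatorname{Gal}(\overline{\Fq}/\Fq)$; equivalently, since $p$ splits in $K$, the relevant prime of the ring class field has residue field contained in $\Fq$.
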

For a proof, see Theorems~10.5, 13.12, and 13.14 in \cite{lang-ef}, or Chapter~3 of 
\cite{kohel-thesis}. For additional background, we also recommend \cite{cox-primes} and \cite[Ch.~II]{sileverman-ec2}.

Theorem~\ref{theorem:action} implies that the cardinality of $\Elltu$ is a multiple of the class number $h(u^2D_K)$, and in fact these values are equal \cite{serre-cm}.  In general, the curves $\ell$-isogenous to $\E$ need not belong to $\Elltu$.
However, when $\ell$ does not divide $v$, we have the following result of Kohel \cite[Prop.~23]{kohel-thesis}:

\begin{theorem}\label{top-volcano}
Let $\ell$ be a prime not dividing $v$.
There are exactly $1+\kro{D_\E}{\ell}$ isogenies
of degree $\ell$ starting from $\E$, and they all lead to curves with endomorphism ring isomorphic to $\OrdE$.
\end{theorem}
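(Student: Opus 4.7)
The plan is to combine the Deuring correspondence (already invoked via Theorem~\ref{theorem:action}) with the standard trichotomy that classifies $\ell$-isogenies between ordinary curves as \emph{horizontal}, \emph{ascending}, or \emph{descending}, depending on how the endomorphism ring of the target compares with $\EndE$. The conclusion will then follow by ruling out the two vertical types and counting the horizontal isogenies directly.

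First I would recall the key structural fact: for any $\ell$-isogeny $\varphi\colon\E\to\E'$ of ordinary curves, the rings $\EndE$ and $\operatorname{End}(\E')$ embed into the same imaginary quadratic field $K$, and one of them contains the other with index $1$ or $\ell$. In terms of conductors, writing $\operatorname{End}(\E')\cong\ord{u'^2D_K}$, we must have $u'\in\{u/\ell,\,u,\,u\ell\}$ (the three cases being ascending, horizontal, descending). This is standard and can be cited from \cite[Prop.~21]{kohel-thesis}.

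The ascending and descending cases are both excluded using the hypothesis $\ell\nmid v$ together with $\Zpi\subseteq\operatorname{End}(\E')$, which forces the conductor $u'$ to divide $v$. In the ascending case, $u'=u/\ell$ requires $\ell\mid u$, hence $\ell\mid v$, a contradiction. In the descending case, $u'=u\ell$ requires $u\ell\mid v$, hence $\ell\mid v/u$, again a contradiction. Thus every $\ell$-isogeny out of $\E$ is horizontal.

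It then remains to count horizontal $\ell$-isogenies. By Theorem~\ref{theorem:action}, the action of $\cl{u^2D_K}$ on $\Elltu$ via ideals of prime norm $\ell$ realizes each horizontal $\ell$-isogeny from $\E$, and distinct prime ideals above $\ell$ yield distinct isogenies (the action being faithful and each horizontal isogeny lifting to a CM action on the Deuring side). The number of prime ideals of $\ord{u^2D_K}$ of norm $\ell$ is $1+\kro{D_\E}{\ell}$, since $\ell\nmid u$ ensures that $\ell$ is coprime to the conductor and therefore behaves in $\ord{u^2D_K}$ exactly as it does in $\mathcal O_K$, with the Kronecker symbol $\kro{D_\E}{\ell}=\kro{D_K}{\ell}$ distinguishing split, ramified, and inert primes. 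Summing gives the claimed count $1+\kro{D_\E}{\ell}$.

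The main obstacle I anticipate is making the trichotomy rigorous: one must verify that the pullback of $\operatorname{End}(\E')$ under $\varphi$ and the dual pushforward give an inclusion of orders of index dividing $\ell$, and that a horizontal $\ell$-isogeny from $\E$ indeed arises from a prime ideal of $\EndE$ above $\ell$ (so that the count by ideals is complete, not merely a lower bound). Both points are classical consequences of the Deuring correspondence applied at $\ell$ coprime to the conductor, so I would cite \cite[Ch.~III, \S10]{lang-ef} or \cite[Prop.~21]{kohel-thesis} rather than reprove them.
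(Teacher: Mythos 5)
The paper does not prove this theorem; it simply cites Kohel's Proposition~23 and moves on, so there is no in-paper argument for you to diverge from. Your sketch is a correct reconstruction of the standard argument behind that citation: the index-$\ell$ trichotomy for $\ell$-isogenies of ordinary curves places the target conductor in $\{u/\ell,\,u,\,u\ell\}$; the hypothesis $\ell\nmid v$ rules out both vertical cases (ascending would force $\ell\mid u\mid v$, descending would force $u\ell\mid v$, hence $\ell\mid v$ either way); and the horizontal isogenies are in bijection with the prime $\ord{D_\E}$-ideals of norm $\ell$, of which there are exactly $1+\kro{D_\E}{\ell}$ because $\ell$ is coprime to the conductor $u$ and therefore splits, ramifies, or stays inert in $\ord{D_\E}$ just as in $\mathcal O_K$.

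One step to sharpen: you justify injectivity of the ideal-to-isogeny map by appealing to faithfulness of the class-group action, but faithfulness is a statement about ideal \emph{classes}, and when $\ell$ splits the two prime ideals above it may well lie in the same class while still determining two distinct order-$\ell$ kernels. What you actually need is the finer Deuring bijection between invertible $\OrdE$-ideals of norm $\ell$ (not their classes) and order-$\ell$ subgroups of $\E$, which also gives you surjectivity (every horizontal $\ell$-isogeny arises from such an ideal). You correctly flag this as the main obstacle and propose to delegate it to Lang (Ch.~III, \S 10) or Kohel (Prop.~21), which is the right move and closes the gap.
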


The notation $\kro{D_\E}{\ell}$ is the Kronecker symbol.
Note that $\kro{D_\E}{\ell}=\kro{D_K}{\ell}$, so we can compute it without knowing $D_\E$.
We are primarily interested in the case $\kro{D_\E}{\ell}=1$, where the prime $\ell$ splits
into distinct prime ideals of norm $\ell$ in $\OrdE$, and these ideals lie in inverse ideal classes $\alpha$ and $\alpha^{-1}$ in $\cl{D_\E}$
(if $\ell$ splits into principal ideals, then $\alpha=\alpha^{-1}=1$).
By Theorem~1, the orbit of $\E$ under the action of $\alpha$ corresponds to a cycle of $\ell$-isogenies whose length is equal to the order of $\alpha$ in $\cl{D_\E}$.
Additional details on the structure of the isogeny graph can be found in \cite{kohel-thesis} and, in a more concise way, in \cite{fouquet-morain}.


\subsection{Explicit computation}\label{section:explicit}

We implement class group computations using binary quadratic forms.
For a negative discriminant $D$, the ideals in $\ord{D}$ correspond to primitive, positive-definite,
binary quadratic forms $ax^2+bxy+cy^2$ (commonly noted $(a,b,c)$) with discriminant $D=b^2-4ac$.
The integer $a$ corresponds to the norm of the ideal.
Ideal classes in $\cl{D}$ are uniquely represented by reduced forms.
As typically implemented, the group operation has complexity $O(\log^2|D|)$ \cite{biehl-buchmann-forms}.\footnote{
The algorithm of \cite{schonhage-fastforms} has complexity $O(\log^{1+\epsilon}|D|)$, but we do not make use of it.}

To navigate the isogeny graph, we rely on the classical modular polynomial $\Phi_\ell(X,Y)$, which parametrizes pairs of $\ell$-isogenous elliptic curves.
This is a symmetric polynomial with integer coefficients.
For a prime $\ell$ not dividing $q$, two elliptic curves $\E_1$ and $\E_2$ defined over $\Fq$ are connected by an isogeny of degree $\ell$ if and only if $\Phi_\ell(j(\E_1),j(\E_2))=0$ \cite[Thm.~19]{washington-ec}.\footnote{This isogeny is necessarily cyclic, since it has prime degree.}

The polynomial $\Phi_\ell$ has size $O(\ell^3\log\ell)$ \cite{cohen-coeff-phi}, and may be computed in time $O(\ell^{3+\epsilon})$ \cite{enge-modular}.
When $\ell$ is small we use precomputed $\Phi_\ell\in\mathbb{Z}[X,Y]$, but for larger $\ell$ we compute $\Phi_\ell/\Fq$, that is, the integer polynomial $\Phi_\ell$ reduced modulo the characteristic of $\Fq$.
This can be accomplished in time $O(\ell^{3+\epsilon}\log{q})$ and space $O(\ell^{2+\epsilon}\log{q})$ using the CRT method described in \cite{bls-crt-modpoly}.  In practice one may consider alternative modular polynomials that are sparser and have smaller coefficients than $\Phi_\ell$.

To find the curves that are $\ell$-isogenous to $\E$, we compute the roots of the univariate polynomial $f(X) = \Phi_\ell(X,j(\E))$ in $\Fq$.
We may restrict ourselves to primes $\ell\nmid v$ with $\kro{D_\E}{\ell}=1$, so that $f(X)$ has exactly two roots, by Theorem~\ref{top-volcano}.
We find these roots by computing $\gcd(f,X^q-X)$ and solving the resulting quadratic, using an expected $O(\mathsf{M}(\ell)\log q)$ operations in $\Fq$ (this is the time to compute $X^q\bmod f$).  Given $\Phi_\ell/\Fq$, we use $O(\ell^2)$ operations in $\Fq$ to construct $f(X)= \Phi_\ell(X,j(\E))$.  For $\ell\gg \log{q}$ this dominates the time to find the roots of $f(X)$ and bounds the cost of taking a single step in the $\ell$-isogeny graph.

\subsection{Relations}
Let us suppose that $\alpha\in \cl{D_\E}$ contains an ideal of prime norm $\ell\nmid v$, and has order $e=|\alpha|$.
In this situation we say that the relation $\alpha^e=1$ holds in $\cl{D_\E}$.
We cannot actually compute $\alpha^e$ in $\cl{D_\E}$, since we do not yet know $D_\E$, but we may apply Theorem~\ref{theorem:action} to compute the action of either $\alpha^e$ or $\alpha^{-e}$ on $\E$ by walking a distance $e$ along the cycle of $\ell$-isogenies, starting from $j=j(\E)$.

\bigskip
\begin{tabular}{rl}
\multicolumn{2}{l}{\textbf{Algorithm} $\textsc{WalkCycle} (j,\ell,e)$:}
\\
1. & Set $j_0 \leftarrow j$.
\\
2. & Let $j_1$ be one of the two roots of $\Phi_\ell(X,j_0)$.
\\
3. & For $s$ from 1 to $e-1$:
\\
4. & \qquad Let $j_{s+1}$ be the root of $\Phi_\ell(X,j_s)/(X-j_{s-1})$.
\\
5. & Return $j_e$.
\end{tabular}
\bigskip

The roots of $\Phi_\ell(X,j_s)$ are typically distinct (exceptions require $|D_\E|\leq 4\ell^2$, by \cite[Thm.~2.2]{fouquet-morain}), but the algorithm applies in any case.

The choice of $j_1$ in Step~2 is arbitrary, it may correspond to the action of either $\alpha$ or $\alpha^{-1}$.
Nevertheless, since $e=|\alpha|=|\alpha^{-1}|$, we have $j_e=j_0$ in either case.
A difficulty arises when we consider a relation that is not unary,
say $\alpha_1^{e_1}\alpha_2^{e_2}=1$, where $\alpha_i$ contains an ideal of prime norm $\ell_i$ with $\ell_1\neq \ell_2$.
 Starting from $j(\E)$, we walk $e_1$ steps along the $\ell_1$-isogeny cycle, then walk $e_2$ steps along the $\ell_2$-isogeny cycle.
We must make two arbitrary choices and may compute the action of
$\alpha_1^{e_1}\alpha_2^{e_2}$, $\alpha_1^{e_1}\alpha_2^{-e_2}$, $\alpha_1^{-e_1}\alpha_2^{e_2}$ or $\alpha_1^{-e_1}\alpha_2^{-e_2}$.
The actions of these four elements are almost certainly not identical; even if $\alpha_1^{e_1}\alpha_2^{e_2}=1$ in $\cl{D_\E}$, it is unlikely that $\alpha_1^{e_1}\alpha_2^{-e_2}$ will fix $j(\E)$.

To address this situation, we formally define a \emph{relation} $R$ as a pair of vectors $(\ell_1,\ldots,\ell_k)$ and $(e_1,\ldots,e_k)$,
where each $\ell_i$ is prime, $\ell_i\nmid v$ and $\kro{D_K}{\ell_i}=1$, and each $e_i$ is a positive integer.%
\footnote{In practice, we may wish to relax the constraint $\ell_i\nmid v$ when $\ell_i$ is very small (e.g. 2), see \cite{sutherland-cm}.\label{small-div-v}}
The integer $k$ is the \emph{arity} of the relation.
Given a discriminant $D=u^2D_K$ with $u\mid v$,
choose ideal classes $\alpha_1,\ldots,\alpha_k\in\cl{D}$ so that $\alpha_i$ contains an ideal of norm $\ell_i$.
This ideal need not be the reduced representative of $\alpha_i$, and may be principal (implying $\alpha_i=1$), this depends on $D$.
We now define
\begin{equation}\label{equation:cardinality}
\#R/D:=\#\left\{\tau\in\left\{\pm 1\right\}^{\left\{1,\ldots,k\right\}}:
\prod_{i=1}^k\alpha_i^{\tau_ie_i}=1\right\},
\end{equation}
as the \emph{cardinality} of the relation $R$ in $\cl{D}$.
  When $\#R/D>0$, we say $R$ \emph{holds} in $\cl{D}$. The integer $\#R/D$ is independent of the choice of the $\alpha_i$.
It has even parity, since if $\tau$ belongs to the set in (\ref{equation:cardinality}), so does $-\tau$.

To compute $\#R/D_\E$, we enumerate the $2^k$ possible walks we may take in the isogeny graph, starting from $j(\E)$, considering all possible sign vectors $\tau$ (these walks typically form a tree in which each path from root to leaf has $k$ binary branch points).
By the symmetry noted above, we may fix $\tau_1=1$.

\bigskip
\begin{tabular}{rl}
\multicolumn{2}{l}{\textbf{Algorithm} $\textsc{CountRelation} (\E,R)$:}
\\
1. & Compute $j\leftarrow\textsc{WalkCycle} (j(\E),\ell_1,e_1)$ and let $J$ be the list $(j)$.
\\
2. & For $i$ from 2 to $k$:
\\
3. & \qquad Set $J'\leftarrow J$ and then set $J$ to the empty list.
\\
4. & \qquad For $j\in J'$:
\\
5. & \qquad \qquad Set $j_0,j'_0\leftarrow j$ and let $j_1$ and $j'_1$ be the two roots of $\Phi_{\ell_i}\left(X,j_0\right)$.
\\
6. & \qquad \qquad For $s$ from 1 to $e_i-1$:
\\
7. & \qquad \qquad \qquad Let $j_{s+1}$ be the root of $f(X)=\Phi_{\ell_i}\left(X,j_s\right)/(X-j_{s-1})$.
\\
8. & \qquad \qquad \qquad Let $j'_{s+1}$ be the root of $f(X)=\Phi_{\ell_i}\left(X,j'_s\right)/(X-j'_{s-1})$.
\\
9. & \qquad \qquad Append $j_{e_i}$ and $j'_{e_i}$ to $J$.
\\
10. & Return $2n$, where $n$ counts the occurrences of $j(\E)$ in $J$.
\end{tabular}
\bigskip

Given $\Phi_\ell/\Fq$, the complexity of Algorithm \textsc{CountRelation} is dominated by
\begin{equation}\label{equation:rel-cost}
\sum_{i=1}^k2^{i-1}e_iT(\ell_i),
\end{equation}
where $T(\ell)$ is the time to take a single step in the $\ell$-isogeny graph, which for large $\ell$ is bounded by $O(\ell^2)$ operations in $\Fq$, as noted above.
Our algorithms rely on \emph{smooth} relations in which $k$, $\ell_i$, and $e_i$ are all rather small:
in the first example of Section~\ref{section:examples} we have $k=10$, $\ell_i\leq 500$, and $e_i\leq 3000$.
As a practical optimization, we order the couples $(\ell_i,e_i)$ to minimize (\ref{equation:rel-cost}), using an estimate of $T(\ell)$.

Computing $\#R/D$ in $\cl{D}$ (where $D$ is known) is straightforward:
one computes the set in (\ref{equation:cardinality})
by evaluating products of powers in $\cl{D}$.
A total of $O(2^k+\sum\log{e_i})$ operations in the class group suffice (independent of the $\ell_i$).

\subsection{Probing class groups}\label{probe-cl}

We now consider how we may distinguish class groups of orders in $K$ by computing the cardinality of suitable relations.
We rely on the following lemma.

\begin{lemma}\label{lemma:rel-div}
Suppose $\ord{D_1}\subseteq \ord{D_2}$.
Then for every relation $R$ we have
\[\#R/D_1\leq\#R/D_2.\]
\end{lemma}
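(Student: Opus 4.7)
The plan is to construct a natural surjective homomorphism $\phi : \cl{D_1} \to \cl{D_2}$ that sends any ideal class in $\cl{D_1}$ containing an ideal of norm $\ell$ (with $\ell$ coprime to the conductor of $\ord{D_1}$) to an ideal class in $\cl{D_2}$ containing an ideal of norm $\ell$. Given such a $\phi$, the lemma follows by a direct functoriality argument: a relation that holds in $\cl{D_1}$ automatically holds in $\cl{D_2}$ after applying $\phi$.

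More precisely, recall that writing $D_i = u_i^2 D_K$, the inclusion $\ord{D_1} \subseteq \ord{D_2}$ is equivalent to $u_2 \mid u_1$, and $u_1$ divides $v$. For any prime $\ell \nmid u_1$, extension of ideals $\mathfrak{a} \mapsto \mathfrak{a}\ord{D_2}$ preserves invertibility and norm, and sends principal ideals to principal ideals; this is a standard result in the theory of proper (invertible) ideals in non-maximal orders of imaginary quadratic fields, and it induces the map $\phi$. The surjectivity of $\phi$ is a classical fact (for instance, it follows from the exact sequence relating $\cl{D_1}$, $\cl{D_2}$, and the quotient $(\ord{D_2}/u_1\ord{D_2})^\times$ modulo units; see \cite{cox-primes}).

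Given a relation $R = \bigl((\ell_1,\dots,\ell_k),(e_1,\dots,e_k)\bigr)$, by definition each $\ell_i$ satisfies $\ell_i \nmid v$, hence $\ell_i \nmid u_1$, so the above applies. Choose classes $\alpha_i \in \cl{D_1}$ with $\alpha_i$ containing an ideal $\mathfrak{a}_i$ of norm $\ell_i$; then $\phi(\alpha_i) \in \cl{D_2}$ contains $\mathfrak{a}_i\ord{D_2}$, which still has norm $\ell_i$. Using these $\phi(\alpha_i)$ as the reference classes in $\cl{D_2}$, we observe that $\prod_i \alpha_i^{\tau_i e_i} = 1$ in $\cl{D_1}$ implies $\prod_i \phi(\alpha_i)^{\tau_i e_i} = 1$ in $\cl{D_2}$. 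Hence every sign vector $\tau$ contributing to $\#R/D_1$ also contributes to $\#R/D_2$. Since $\#R/D$ is independent of the choice of $\alpha_i$, we conclude $\#R/D_1 \le \#R/D_2$.

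The main obstacle is really just the setup: verifying that $\phi$ is well defined on ideal classes and that the norm is preserved under extension. Both are standard when $\ell_i \nmid u_1$, and this is exactly the hypothesis baked into the definition of a relation through the condition $\ell_i \nmid v$, so no essentially new input is required beyond the existence of the surjection $\cl{D_1} \twoheadrightarrow \cl{D_2}$.
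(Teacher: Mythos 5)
Your proposal is correct and follows essentially the same argument as the paper: both use the extension map $\mathfrak{a}\mapsto\mathfrak{a}\ord{D_2}$ to induce a norm-preserving morphism $\cl{D_1}\to\cl{D_2}$ (citing Cox, Prop.~7.20) and then observe that this transports relations. The surjectivity of the map, which you invoke, is not actually needed for the inequality.
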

\begin{proof}
Let $\mathfrak{a}$ be an $\ord{D_1}$-ideal with norm prime to the conductor of $D_1$.
The map
\[\mathfrak{a}\mapsto \mathfrak{a} \ord{D_2}\]
induces a natural morphism of class groups.
It preserves norms (see \cite[Prop.~7.20]{cox-primes} for a proof
in the case $D_2$ is fundamental, from which one easily derives the general case)
and therefore transports relations from $\cl{D_1}$ to $\cl{D_2}$.
\end{proof}

\begin{corollary}\label{cor-piff}
Let $p^k$ be a prime power dividing $v$, and let $D_1=(v/p^j)^2D_K$ and $D_2=p^{2k}D_K$, where $j=\nu_p(v)-k+1$.
Suppose $\#R/D_1>\#R/D_2$ for some relation $R$, and let $D=u^2D_K$ where $u\mid v$.  Then
$p^k\mid u$ if and only if $\#R/D<\#R/D_1$.
\end{corollary}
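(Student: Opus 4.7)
The plan is to translate the numerical hypothesis $p^k\mid u$ versus $p^k\nmid u$ into inclusions of orders, and then apply Lemma~\ref{lemma:rel-div} twice to sandwich $\#R/D$ between $\#R/D_1$ and $\#R/D_2$. The key underlying fact is that for conductors $f_1,f_2\mid v$, the order $\ord{f_1^2D_K}=\mathbb Z+f_1\mathcal O_K$ is contained in $\ord{f_2^2D_K}=\mathbb Z+f_2\mathcal O_K$ if and only if $f_2\mid f_1$.

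First I would unpack the conductors. The order $\ord{D}$ has conductor $u\mid v$; the order $\ord{D_2}$ has conductor $p^k$; and the order $\ord{D_1}$ has conductor $v/p^j=v/p^{\nu_p(v)-k+1}$, so its $p$-adic valuation is exactly $k-1$ while its valuation at any other prime equals that of $v$. From these two observations, $\ord{D}\subseteq\ord{D_2}$ is equivalent to $p^k\mid u$, and $\ord{D_1}\subseteq\ord{D}$ is equivalent to $u\mid v/p^j$, which, given $u\mid v$, is equivalent to $\nu_p(u)\leq k-1$, i.e.\ to $p^k\nmid u$.

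Next I would combine these inclusions with Lemma~\ref{lemma:rel-div}. If $p^k\mid u$, then $\ord{D}\subseteq\ord{D_2}$, so $\#R/D\leq\#R/D_2$; using the hypothesis $\#R/D_2<\#R/D_1$, this yields $\#R/D<\#R/D_1$. Conversely, if $p^k\nmid u$, then $\ord{D_1}\subseteq\ord{D}$, so $\#R/D_1\leq\#R/D$, which contradicts $\#R/D<\#R/D_1$. Taking the contrapositive of the second implication and combining it with the first gives the desired equivalence.

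There is no real obstacle here: once the correspondence between conductor divisibility and order inclusion is made explicit, the rest is a two-line application of Lemma~\ref{lemma:rel-div}. The only point worth double-checking while writing is that $j=\nu_p(v)-k+1$ really makes $\nu_p(v/p^j)=k-1$ (so that $D_1$ is the ``largest'' discriminant whose order still contains $\ord{D}$ when $p^k\nmid u$), and that $D_2$ is the ``smallest'' discriminant whose order is still contained in $\ord{D}$ when $p^k\mid u$; these are exactly the two extremal choices that make the sandwich argument work.
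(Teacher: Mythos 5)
Your proof is correct. The paper states Corollary~\ref{cor-piff} without proof, treating it as an immediate consequence of Lemma~\ref{lemma:rel-div}, and your argument is precisely the intended derivation: you correctly identify the conductors of $\ord{D}$, $\ord{D_1}$, $\ord{D_2}$ as $u$, $v/p^j$ (with $p$-adic valuation $k-1$), and $p^k$ respectively, invoke the standard equivalence $\ord{f_1^2D_K}\subseteq\ord{f_2^2D_K}\Leftrightarrow f_2\mid f_1$, and then apply Lemma~\ref{lemma:rel-div} to each side of the dichotomy $p^k\mid u$ versus $p^k\nmid u$. The hypothesis $u\mid v$ is used exactly where it needs to be, namely to conclude $u\mid v/p^j$ from $\nu_p(u)\leq k-1$.
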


Provided we have a suitable relation $R$ for each prime-power $p^k$ dividing $v$, we can apply the corollary
to $D=D_\E$ to determine the prime-power factorization of $u$, and hence the endomorphism ring of $\E$.
The computations of $\#R/D_1$ and $\#R/D_2$ are performed in the class groups $\cl{D_1}$ and $\cl{D_2}$, but the computation of $\#R/D_\E$ takes place
in the isogeny graph via the \textsc{CountRelation} algorithm.
Notice that we may replace $v$ in the corollary by any multiple of $u$ dividing $v$.

\begin{proposition}\label{prop-term}
For all primes $p>3$, there are infinitely many relations satisfying the assumptions of Corollary~\ref{cor-piff}.
\end{proposition}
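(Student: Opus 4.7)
The plan is to exhibit unary relations $R=(\ell,1)$: for such a relation $\#R/D\in\{0,2\}$ equals $2$ precisely when the class $[\mathfrak{l}]$ of a prime of norm $\ell$ is trivial in $\cl{D}$. So it suffices to produce infinitely many primes $\ell\nmid v$ with $\kro{D_K}{\ell}=1$ for which $[\mathfrak{l}]_{D_1}=1$ in $\cl{D_1}$ but $[\mathfrak{l}]_{D_2}\neq 1$ in $\cl{D_2}$.

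For this I would invoke class field theory. Let $H_i$ be the ring class field of $\ord{D_i}$: it is an abelian extension of $K$ with $\operatorname{Gal}(H_i/K)\cong\cl{D_i}$, and under the Artin map $[\mathfrak{l}]_{D_i}$ corresponds to $\operatorname{Frob}_\ell$. For $\ell$ split in $K$, the condition $[\mathfrak{l}]_{D_i}=1$ is then equivalent to $\ell$ splitting completely in $H_i/\mathbb Q$. The Chebotarev density theorem applied to the compositum $H_1H_2/\mathbb Q$ produces a positive density of primes splitting completely in $H_1$ but not in $H_2$, provided $H_2\not\subset H_1$.

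To establish this non-containment, I would invoke the classical identification of the conductor of $H_i/K$ as $f_i\mathcal O_K$, where $f_1=v/p^j$ and $f_2=p^k$ are the conductors of the orders $\ord{D_i}$. A containment $H_2\subset H_1$ would force $f_2$ to divide $f_1$, but $\nu_p(f_1)=\nu_p(v)-j=k-1$ is strictly less than $\nu_p(f_2)=k$, a contradiction. Chebotarev then supplies infinitely many primes splitting completely in $H_1$ but not in $H_2$; after discarding the finitely many dividing $v$, each remaining prime gives a valid relation $R=(\ell,1)$ with $\#R/D_1=2>0=\#R/D_2$ (the hypothesis $\kro{D_K}{\ell}=1$ is automatic since $\ell$ splits in $K\subset H_1$).

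The most delicate step is the conductor identification: it is standard for $f_i\geq 2$ and fundamental discriminants with $\mathcal O_K^\times=\{\pm1\}$, but small corrections can occur for the exceptional discriminants $D_K\in\{-3,-4\}$ or at the primes $2$ and $3$, where extra units of intermediate orders enter. The hypothesis $p>3$ keeps us away from these anomalies, ensuring that the strict inequality $\nu_p(f_2)>\nu_p(f_1)$ translates directly into the genuine non-divisibility of conductors required above.
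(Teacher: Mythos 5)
Your proof is correct and takes essentially the same route as the paper: unary relations $R_\ell$, Chebotarev applied to the ring class fields, and a conductor-divisibility argument (the paper cites Cox, Ex.~9.20--9.23, for the conductor identification you sketch). You even orient the non-containment the right way---you need $H_2\not\subseteq H_1$, which is what you prove---whereas the paper's wording invokes $L_1\not\subseteq L_2$ (a harmless slip, since showing $p^k$ divides the conductor of $L_2$ but not of $L_1$ in fact rules out containment in either direction).
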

\begin{proof}
Consider unary relations with $e_1=1$ and $\ell_1=\ell$, and denote them $R_{\ell}$.  The relation $R_{\ell}$ holds in $\cl{D}$ precisely when $\ell$ splits into principal ideals in $\ord{D}$.
For $i\in\{1,2\}$, let $S_i$ be the set of primes $\ell$ such that $R_\ell$ holds in $\cl{D_i}$.
We now show $S_1\setminus S_2$ is infinite, referring to material from \cite[Ch.~8,9]{cox-primes}.

The set $S_i$ is equal to the set of primes that split completely in the ring class field $L_i$ of $\ord{D_i}$, and recall that $L_i$ is a Galois extension of $\mathbb Q$ \cite[Lem.~9.3]{cox-primes}.
The Chebotarev density theorem asserts that $S_1$ and $S_2$
are infinite, and $S_1\setminus S_2$ is finite if and only if $L_1\subseteq L_2$ \cite[Thm.~8.19]{cox-primes}.

But $L_1$ cannot be contained in $L_2$, for $\ord{D_1}$ is not contained in $\ord{D_2}$.
Indeed, $p^k$ divides the conductor of $\ord{D_2}$ but not that of $\ord{D_1}$,
which implies that $p^k$ divides the conductor of $L_2$
but not that of $L_1$ (see \cite[Ex.~9.20--9.23]{cox-primes}).
\end{proof}

In practice, of course, there are many other relations satisfying the requirements of Corollary~\ref{cor-piff}.
Empirically, relations $R$ holding in $\cl{D_1}$ satisfy $\#R/D_1 > \#R/D_2$ with probability converging to $1$ as $p$ grows.
We will not attempt to prove this statement, but as a heuristic assume that this probability is at least bounded above zero, and
furthermore that this applies to relations that are smooth (as defined in Section~\ref{section:complexity}).
Note that, independent of this assumption, the above proposition guarantees that our algorithms are always able to terminate.

\section{Algorithms}

\subsection{Computing $\OrdE$ from above}

We now describe our first algorithm to compute $u$, the conductor of the order $\OrdE$ isomorphic to $\EndE$.  We rely on Algorithm $\textsc{FindRelation}(D_1,D_2)$, described in Section~\ref{section:findrel}, to obtain relations to which Corollary~\ref{cor-piff} may be applied.

For small primes $p$ dividing $v$, say all $p\leq B$ for some $B$, we can efficiently determine the largest prime power $p^k$ dividing $u$ by isogeny climbing, as described in \cite[Sec.~4.2]{kohel-thesis} and \cite[Sec.~4.1]{sutherland-cm}.  This yields an isogenous curve $\E'$ for which the conductor of $\ord{\E'}$ is $u'=u/p^k$, using $O(kp^2\log{q})$ operations in $\Fq$ (given $\Phi_p/\Fq$).

For simplicity, the algorithm below assumes that $v$ is not divisible by the square of a prime larger than $B$.  The modification to handle large primes whose square divides $v$ is straightforward but unlikely to be needed in practice.

\bigskip
\begin{tabular}{rl}
\multicolumn{2}{l}{\textbf{Algorithm~1} $\left(\E/\Fq\right)$:}
\\
1. & Let Schoof's algorithm compute the trace $t$ of $\E$, then determine $D_K$, $v$, 
\\ & and the prime factors of $v$, by factoring $4q-t^2=-v^2D_K$.
\\
2. & Select a bound $B$ and set $u\leftarrow 1$.
\\
3. & For each prime $p\leq B$ dividing $v$:
\\
4. & \qquad Determine the largest power of $p$ dividing $u$ by isogeny climbing,
\\ & \qquad then set $\E\leftarrow \E'$, remove all powers of $p$ from $v$, and update $u$.
\\
5. & For each prime $p>B$ that divides $v$:
\\
6. & \qquad Set $R\leftarrow \textsc{FindRelation}(D_1,D_2)$ with $D_1=(v^2/p^2)D_K$, $D_2=p^2D_K$.
\\
7. & \qquad Determine whether $p$ divides $u$ by checking if $\#R/D_\E < \#R/D_1$,
\\ & \qquad then update $u$ appropriately.
\\
8. & Return $u$.
\end{tabular}
\bigskip

The correctness of Algorithm~1 follows from Corollary~\ref{cor-piff}.
Its running time depends on $B$ and the complexity of \textsc{FindRelation}.
Using $B=\LL{1/2,1/\sqrt{12}}(q)$, we obtain in Section~\ref{section:complexity}
(Corollary~\ref{cor-alg1}) a heuristic bound of
\[\LL{1/2,\sqrt{3}/2}(q)\]
on the expected running time of Algorithm~1, using $\LL{1/2,1/\sqrt{3}}(q)$ space.

Note that the relations computed in Algorithm~1 only depend on the Frobenius trace $t$ of $\E$, not its endomorphism ring,
hence they may be reused to compute the endomorphism ring of any curve in the same isogeny class.
These relations also provide a means to subsequently verify the computation of $u$,
but for this purpose we may wish to specialize the relations to $u$, a task we now consider.

\subsection{Certifying $u$}

Let us suppose that a particular value $u$ is claimed as the conductor of $\OrdE$.
This may arise in a situation where $u$ is actually known, either via Algorithm~1 or from the
construction of $\E$ (say, by the CM method), but may also occur when one wishes to test a provisional value of $u$,
as we will do in Algorithm~2.  We first give an algorithm to construct a \emph{certificate} that may be used to efficiently check whether a given curve with trace $t$ in fact has endomorphism ring $\OrdE$ with conductor $u$ (equivalently, it allows one to test whether an element of $\Ellt$ lies in $\Elltu$).

The construction of this certificate depends only on $u$, $v$, and $D_K$ and does not require an elliptic curve as input.  Small prime factors of $u$ and $v$ may be removed
by isogeny climbing prior to calling \textsc{Certify}.

\bigskip
\begin{tabular}{rl}
\multicolumn{2}{l}{\textbf{Algorithm} $\textsc{Certify} (u,v,D_K)$:}
\\
1. & For each prime factor $p$ of $v/u$:
\\
2. &\qquad Set $R_p\leftarrow \textsc{FindRelation}(D_1,D_2)$ with $D_1=u^2D_K$ and $D_2=p^2D_K$.
\\
3. & For each prime factor $p$ of $u$:
\\
4. &\qquad Set $R_p\leftarrow \textsc{FindRelation}(D_1,D_2)$ with $D_1=(u^2/p^2)D_K$, $D_2=u^2D_K$.
\\
5. & Return $C=(u,v,D_K,\{R_p\}_{p\mid v})$.
\end{tabular}
\bigskip

The relations computed in Step~2 may verify that the actual value of $u$ divides the claimed value,
whereas the relations computed in Step~4 may verify that the claimed value of $u$ is not a proper divisor of $u$,
as shown by Algorithm \textsc{Verify}.

\bigskip
\begin{tabular}{rl}
\multicolumn{2}{l}{\textbf{Algorithm} $\textsc{Verify} (\E/\Fq,C)$:}
\\
1. & For each prime factor $p$ of $v/u$, verify that $\#R_p/\E > \#R_p/p^2D_K$.
\\
2. & For each prime factor $p$ of $u$, verify that $\#R_p/(u^2/p^2)D_K > \#R_p/\E$.
\\
3. & Return \textbf{\tt true} if all verifications succeed and \textbf{\tt false} otherwise.
\end{tabular}
\bigskip

In addition to the verification of $u$ above, one may also wish to verify that $v$ and $D_K$ are correct.
This may be accomplished in polynomial time if the trace $t$ and the factorizations of $v$ and $D_K$ are
included in the certificate. One may additionally wish to certify the primes in these factorizations \cite{atkin-morain-ecpp},
or the verifier may apply a polynomial-time primality test \cite{aks-primes}.  Assuming these values are correct, the conductor
of $\OrdE$ is equal to $u$ if and only if $\textsc{Verify}(\E,C)$ returns \textbf{\tt true}.
This statement does not depend on any unproven hypotheses.

The size of the certificate is unconditionally bounded by $O(\log^3{q})$, and under heuristic assumptions we obtain 
an $O(\log^{2+\epsilon}{q})$ bound (Corollary~\ref{cor-cert}).
Within this bound, certificates for primes dividing $v$ or $D_K$ can be included,
as each certificate requires $O(\log^{1+\epsilon}q)$ space and there are $O(\log{q})$ such primes.

The expected running times of \textsc{Certify} and \textsc{Verify} depend on a smoothness parameter $\mu$
used by \textsc{FindRelation}.  This parameter may be chosen to balance the cost of certification and verification,
as in Algorithm~2 below, or one may reduce the verification time by increasing the certification time.
See Proposition~\ref{prop-verify} and the discussion following for an analysis of this trade-off.

\subsection{Computing $\OrdE$ from below}

We now present a second algorithm to compute $u$, which may be much faster than Algorithm~1 if $u$ is small compared to $v$,
and is in general only slightly slower.  Our basic strategy is to examine each of the divisors $u_i$ of $v$ in order,
attempting to prove that $u=u_i$ is the conductor of $\OrdE$, by constructing a certificate and verifying it.
This only requires finding relations in class groups with discriminants whose absolute value is at most $|u^2D_k|$.

Typically $v$ has few divisors (almost always $O(\log^{\log 2}v)$ \cite[p.~265]{hardy-wright}),
in which case this basic strategy is quite effective.  However, in order to improve performance in the worst case,
we apply isogeny climbing to effectively remove prime factors from $v$ as we go, thereby reducing the number of $u$'s we must consider.
As above, we suppose $v$ is square-free for the sake of presentation.

\bigskip
\begin{tabular}{rl}
\multicolumn{2}{l}{\textbf{Algorithm~2} $\left(\E/\Fq\right)$:}
\\
1. & Let Schoof's algorithm compute the trace $t$ of $\E$, then determine $D_K$, $v$, 
\\ & and the prime factors of $v$ by factoring $v^2D_K=4q-t^2$.
\\
2. & Set $x\leftarrow 0$.
\\
3. & Set $w\leftarrow\max(1/3,x/2+1/\log{q})$.
\\
4. & For primes $p<\exp(\log^w{v})$:
\\
5. & \qquad Test whether $p\mid u$ by isogeny climbing, then set $\E\leftarrow \E'$ and $v\leftarrow v/p$.
\\
6. & For divisors $u$ of $v$ less than $\exp(\log^{2w}v)$:
\\
7. & \qquad If $\textsc{Verify}(\E, \textsc{Certify}(u,v,D_k))$ returns \textbf{\tt true}:
\\  & \qquad \qquad Return the product of $u$ and the primes determined in Step~5.
\\
8. & Set $x\leftarrow 2w$ and go to Step~3.
\\
\end{tabular}
\bigskip

The variable $w$ is used to bound the complexity of isogeny climbing using a known lower bound for $u$ that increases as the algorithm proceeds.
Initially we have no information about $u$ so we use the cost of the factorization computed in Step~1 to select $w$.

The running time of Algorithm~2 is analyzed in Section~\ref{section:complexity}, where the bound
\[\LL{1/2+o(1),1}(|D_\E|) + \LL{1/3,c_f}(q)\]
is obtained under suitable heuristic assumptions.  The same assumptions yield an $\LL{1/2+o(1),2/3}(|D_\E|)\log q$ space bound.


\subsection{Finding Relations}\label{section:findrel}

Given negative discriminants $D_1$ and $D_2$, we seek a relation $R$ satisfying $\#R/D_1 > \#R/D_2$.  We find such an $R$ by searching for a relation that holds in $\cl{D_1}$ and then testing this inequality.  As noted at the end of Section~\ref{section:preliminaries}, this test almost always succeeds, but if not we search for another relation.

To find relations that hold in $\cl{D_1}$, we adapt an algorithm of McCurley \cite{hafner-mccurley,mccurley}.  Fix a smoothness bound $B$, and for each prime $\ell \leq B$ with $\kro{D_1}{\ell}\neq -1$, let $f_\ell$ denote the \emph{primeform} with norm $\ell$.  By this we mean the binary quadratic form $(\ell,b_\ell,c_\ell)$ of discriminant $D_1$ with $b_\ell\geq 0$, which may be constructed via \cite[Alg.~3.3]{buchmann-bqf}.

We then generate random reduced forms by computing the product
\begin{equation}\label{equation:abc1}
(a,b,c)=\prod_\ell f_\ell^{x_\ell},
\end{equation}
where the $x_\ell$ are suitably constrained (and mostly zero).  If the prime factors of $a$ are bounded by $B$, say  $a=\prod_\ell \ell^{y_\ell}$,
then we may decompose $(a,b,c)$ as
\begin{equation}\label{equation:abc2}
(a,b,c)=\prod_\ell f_\ell^{\tau_\ell y_\ell},
\end{equation}
where for nonzero $y_\ell$, $\tau_\ell=\pm 1$ is defined by $b=\tau_\ell b_\ell\bmod 2\ell$.

Recall that $n=\sqrt{|D_1|/3}$ is an upper bound on the norm of a reduced imaginary quadratic form \cite[Ex.~5.14]{crandall-pomerance}.
Provided that $\prod_\ell\ell^{|x_\ell|}>n$, the decompositions in (\ref{equation:abc1}) and (\ref{equation:abc2}) must be different,
since $a\leq n$.  This yields a non-trivial relation with exponents $e_\ell=x_\ell-\tau_\ell y_\ell$.

\medskip

In order to minimize the cost of computing $\#R/D_\E$ (via \textsc{CountRelation}) for the relations we obtain, in addition to
bounding the primes $\ell$, we must also bound the exponents $e_\ell$, and especially the number of nonzero $e_\ell$, which determines the arity $k$ of $R$. To achieve this we require all but a constant number $k_0$ of the $x_\ell$ to be zero (we use $k_0=3$), and note that if we assume
$a$ is a random $B$-smooth integer in $[1,n]$, then we expect it to have approximately $2\log{n}/\log{B}$ distinct
prime factors.  In the unlikely event that $k$ significantly exceeds this expected value, we seek a different relation.

Having bounded $k$, the complexity of \textsc{CountRelation} then depends on the products $e_\ell T(\ell)$ appearing in (\ref{equation:rel-cost}).
For large $\ell$ we have $T(\ell)=O(\ell^2)$ operations in $\Fq$.
To make the products $e_\ell T(\ell)$ approximately equal we may use the bound $|x_\ell|\leq (B/\ell)^2$.
In practice we use a bound
\[|x_\ell|\leq (B/\ell)^\omega,\]
that better reflects the cost of $T(\ell)$ for moderate values of $\ell$ (we typically use $\omega\approx 1.6$);
this has no impact on our asymptotic analysis.

\medskip

The Canfield--Erd{\H o}s--Pomerance theorem \cite[Thm.~3.1]{canfield-erdos-pomerance} implies that if we sample
uniformly random integers from the interval $[1,n]$ until we find one that is $\LL{1/2,\mu}(n)$-smooth, our expected
sample size is $\LL{1/2,1/(2\mu)}(n)$, where the implied constants can all be made explicit.  This allows us to compute a lower
bound $m(B,n)$ on the number of random integers we must sample from $[1,n]$ in order to have a better than $50\%$ chance of
finding one that is $B$-smooth. 

We initially set $B=\LL{1/2,\mu}(n)$, for a suitably chosen $\mu$, and compute $m(B,n)$ on the heuristic
assumption that the norms of the forms we generate are about as likely to be $B$-smooth as
random integers in the interval $[1,n]$.\footnote{This is true for random forms, see \cite[Prop.~11.4.3]{buchmann-bqf}.}
In practice we find this to be the case, however, to account for the possibility that none of the elements generated
according to our constraints have $B$-smooth norms (or that none of the relations we find are suitable),
we increase the smoothness bound by a constant factor $r$ slightly greater than 1, if we fail to find a suitable
relation after testing $2m(B,n)$ elements.

\bigskip
\begin{tabular}{rl}
\multicolumn{2}{l}{\textbf{Algorithm} $\textsc{FindRelation} (D_1,D_2)$:}
\\
1. & Set $B=\LL{1/2,\mu}(n)$, where $n=\sqrt{|D_1|/3}$.
\\
2. & Compute primeforms $f_\ell$ for $\ell\leq B$.
\\
3. & Repeat $2m(B,n)$ times:
\\
4. & \qquad Pick random integers $x_\ell$ with $|x_\ell| \leq (B/\ell)^\omega$ such that at most $k_0$ 
\\ & \qquad of the $x_\ell$ are nonzero and $\prod_\ell\ell^{|x_\ell|} > n$.
\\
5. & \qquad Compute the reduced form $(a,b,c)=\prod_\ell f_\ell^{x_\ell}$.
\\
6. & \qquad If $a$ is $B$-smooth:
\\
7. & \qquad\qquad Let $R$ be the relation with $e_\ell=|x_\ell-\tau_\ell y_\ell|$ where $a=\prod_\ell \ell^{y_\ell}$,
\\ & \qquad\qquad and let $k$ be the arity of $R$.
\\
8. & \qquad\qquad If $k<(2/\mu)\log^{1/2}n$ and $\#R/D_1 > \#R/D_2$, then return $R$.
\\
9. & Set $B\leftarrow rB$ and go to Step~2.
\end{tabular}
\bigskip

As a practical optimization, we may choose not to generate completely new values for $x_\ell$ every time Step~4 is executed, instead
changing just one bit in one of the nonzero $x_\ell$.  This allows the form $(a,b,c)$ to be
computed in most cases with a single composition/reduction using a precomputed set of binary powers of the $f_\ell$.

To implement Step~6 one may use the elliptic curve factorization method (ECM) to probabilistically identify $B$-smooth integers in time $\LL{1/2,2}(B)=\LL{1/4,2\mu}(n)$, which effectively makes the cost of smoothness testing negligible within the precision of our subexponential complexity bounds.  A faster approach uses Bernstein's algorithm, which identifies the smooth numbers in a given list in essentially linear time \cite{djb-smoothparts}.  This does not change our complexity bounds and for the sake of simplicity we use ECM in our analysis.

In practice, the bound $B$ is quite small (under 1000 in both our examples), and
very little time is spent on smoothness testing.  In our implementation we used a combination of trial division and a restricted form of Bernstein's algorithm.


\section{Complexity Analysis}\label{section:complexity}

The complexity bounds derived below depend on the following heuristics:
\begin{enumerate}
\item
\textbf{Small primes.}
We assume the GRH.  The effective Chebotarev bounds of Lagarias and Odlyzko then imply that for all $x=\Omega(\log^{2+\epsilon}|D_K|)$ there are $\Omega(x/\log{x})$ primes less than $x$ that split in $\mathcal{O}_K$, where the implied constants are all effectively computable \cite[Thm.~1.1]{lagarias-odlyzko-chebotarev}.
\vspace{3pt}
\item
\textbf{Random norms.}
We assume that the norms of the reduced forms computed in Step~4 of \textsc{FindRelation} have approximately 
the distribution of random integers in $[1,n]$.  Under this assumption, we apply the Canfield--Erd{\H o}s--Pomerance theorem
to estimate the probability of generating a form whose norm is $B$-smooth.
\vspace{3pt}
\item
\textbf{Random relations.}
If $D_1=u_1^2D_K$ and $D_2=u_2^2D_K$ are sufficiently large discriminants with $u_2\nmid u_1$, and $R$ is a random
relation for which $\#R/D_1 > 0$, with $\ell_i$ and $e_i$ bounded as in \textsc{FindRelation}, then we assume that $\#R/D_1 > \#R/D_2$ with
probability bounded above zero.
\vspace{3pt}
\item
\textbf{Integer factorization.}
We assume that ECM finds a prime factor $p$ of an integer $n$ in expected time $\LL{1/2,2}(p)\log^2{n}$ \cite{lenstra-ecm}, and that the expected running time of the number field sieve is $\LL{1/3,c_f}(n)$ \cite{blp-nfs}.
\end{enumerate}

In the propositions and corollaries that follow, we use the shorthand (\textbf{H}) to indicate that we are assuming Heuristics~1--4 above.

\begin{proposition}\label{prop-findrel}
$(\boldsymbol{\rm H})$ $\textsc{FindRelation}(D_1,D_2)$ has expected running time
\[\LL{1/2,1/(\sqrt{8}\mu)}(|D_1|)\thickspace +\thickspace \LL{1/2,0}(|D_1|)\log^3|D_2|.\]
The output relation $R$ has norms $\ell_i$ bounded by $\LL{1/2,\mu/\sqrt{2}}(|D_1|)$,
exponents $e_i$ bounded by $\LL{1/2,\sqrt{2}\mu}(|D_1|)$, and arity $k<(2/\mu)\log^{1/2}|D_1|$.
\end{proposition}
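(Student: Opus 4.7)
The plan is to establish the three structural bounds on the output relation directly from the algorithm's construction, then to combine the expected number of iterations (via the Canfield--Erd{\H o}s--Pomerance estimate) with the per-iteration cost to derive the running time bound.

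First, set $n=\sqrt{|D_1|/3}$. Since $\log n=\tfrac{1}{2}\log|D_1|+O(1)$ and $\log\log n=\log\log|D_1|+O(1)$, the identity $\LL{1/2,c}(n)=\LL{1/2,c/\sqrt{2}}(|D_1|)$ holds. The bound $\ell_i\leq B=\LL{1/2,\mu}(n)=\LL{1/2,\mu/\sqrt{2}}(|D_1|)$ is immediate from Step~2, since primeforms are computed only for primes up to $B$. For the exponents, Step~4 enforces $|x_\ell|\leq(B/\ell)^\omega\leq B^\omega$; taking $\omega=2$ for the analysis and combining with $y_\ell\leq\log n$ gives $e_\ell\leq B^2+\log n=\LL{1/2,2\mu}(n)=\LL{1/2,\sqrt{2}\mu}(|D_1|)$. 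The arity bound $k<(2/\mu)\log^{1/2}|D_1|$ follows from the explicit check in Step~8, since $\log^{1/2}n\leq\log^{1/2}|D_1|$.

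Next, I would analyze the expected iteration count. The norm $a$ of the reduced form produced in Step~5 satisfies $a\leq n$, so by Heuristic~2 the probability that $a$ is $B$-smooth is asymptotic to that for a uniformly random integer in $[1,n]$; Canfield--Erd{\H o}s--Pomerance then yields a per-iteration success probability of $1/\LL{1/2,1/(2\mu)}(n)$, so that $m(B,n)=\LL{1/2,1/(2\mu)}(n)=\LL{1/2,1/(\sqrt{8}\mu)}(|D_1|)$ iterations suffice in expectation to produce a $B$-smooth $a$. The expected number of distinct prime divisors of a $B$-smooth integer in $[1,n]$ is $O(\log n/\log B)=O(\log^{1/2}n)$, which is strictly less than $(2/\mu)\log^{1/2}n$, so the arity check in Step~8 succeeds with probability bounded below; and by Heuristic~3, a positive fraction of relations so produced satisfy $\#R/D_1>\#R/D_2$. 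These constant-probability events multiply the iteration count by only a constant factor.

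Finally, I would bound the per-iteration cost. The composition in Step~5 uses $O(\log B)$ multiplications in $\cl{D_1}$, each of cost $O(\log^2|D_1|)$; the ECM smoothness test in Step~6 costs $\LL{1/2,2}(B)\log^2 a$, which is smaller than $\LL{1/2,c}(|D_1|)$ for every $c>0$ and hence negligible within the precision of the main bound. Multiplying this polylog per-iteration cost by the expected $\LL{1/2,1/(\sqrt{8}\mu)}(|D_1|)$ iteration count yields the first term. The second term reflects the test $\#R/D_1>\#R/D_2$, performed a constant number of times in expectation: by the cost analysis of \textsc{CountRelation} at the end of Section~\ref{section:preliminaries}, each such test requires $O(2^k+k\log(\max e_i))=\LL{1/2,0}(|D_1|)$ class group operations, each of cost $O(\log^2|D_2|)$, together with the polynomial setup cost for $k$ primeforms in $\cl{D_2}$. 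The main obstacle is carefully tracking the various $o(1)$ terms through the subexponential $L$-notation when passing from $n$ to $|D_1|$, and verifying that the constant-probability events above do not inflate the bound by a subexponential factor; this in turn relies on the simultaneous validity of Heuristics~2 and~3.
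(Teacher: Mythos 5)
Your proposal follows the same route as the paper: convert from $n=\sqrt{|D_1|/3}$ to $|D_1|$ in the $L$-notation, read the $\ell_i$, $e_i$, and $k$ bounds off the algorithm, estimate the iteration count via Canfield--Erd{\H o}s--Pomerance under Heuristic~2, invoke Heuristic~3 for a constant-probability pass through Step~8, and bound the per-iteration cost by the ECM smoothness test together with an expected $O(1)$ evaluations of $\#R/D_1$ and $\#R/D_2$. The arithmetic checks out, and your reconstruction of the exponent bound $e_\ell\leq B^\omega+\log_2 n=\LL{1/2,\sqrt{2}\mu}(|D_1|)$, which the paper waves off as ``immediate,'' is correct and more explicit than what appears in print.

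The one genuine gap is that you never invoke Heuristic~1, and it is load-bearing. Before applying Heuristic~2 and Canfield--Erd{\H o}s--Pomerance to conclude that $2m(B,n)$ draws yield a $B$-smooth norm with probability at least $1/2$, one must know that Step~4 can actually generate more than $2m(B,n)=\LL{1/2,1/(2\mu)}(n)$ distinct forms; otherwise, even if the norms of the generateable forms were ideally distributed, there might simply be too few of them to contain a $B$-smooth value. The paper uses the GRH-based effective Chebotarev bound (Heuristic~1) to obtain $\Omega(\log B)$ primes $\ell=O(\log^2 B)$ with $\kro{D_1}{\ell}=1$, so that with $|x_\ell|$ ranging up to $B^{\omega-\delta}$ and $k_0$ of them nonzero, the feasible set exceeds $2m(B,n)$. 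Your proof passes directly from ``$a\leq n$'' and Heuristic~2 to the CEP estimate without this cardinality check, implicitly assuming what Heuristic~1 is there to provide. Since you are entitled to all of the hypotheses in $({\rm \bf H})$, the fix is simply to insert this argument at the start; the remainder of your proof is sound.
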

\begin{proof}
Let $B=\LL{1/2,\mu}(n)=\LL{1/2,\mu/\sqrt{2}}(|D_1|)$, where $n=\sqrt{D_1/3}$.
By Heuristic~1, for sufficiently large $B$ there are $\Omega(\log{B})$ primes $\ell=O(\log^2{B})$ with $\kro{D_1}{\ell}=1$.
For these $\ell$, the value of $|x_\ell|$ may range up to $B^{\omega-\delta}$, for any $\delta>0$.  Thus there are more than $2m(B,n)=\LL{1/2,1/(2\mu)}$ distinct
elements that may be generated in Step~4, and with high probability at least $m(B,n)$ are.
So Heuristic~2 applies, and with probability greater than $1/2$ we generate at least one element with $B$-smooth norm each time Step~3 is executed.

Under Heuristic~2, the expected number $k$ of nonzero exponents $e_i$ is at most
\[k_0+2\log{n}/\log{B}=k_0+\frac{1}{\mu}\log^{1/2}|D_1|(\log\log|D_1|)^{-1/2},\]
since we expect a random $B$-smooth integer in $[1,n]$ to have $(2+o(1))\log{n}/\log{B}$ distinct prime factors
(this may be proven with the random bisection model of \cite{bach-thesis}).
This, together with Heuristic~3, ensures that when Step~8 is reached the algorithm terminates, with some constant probability greater than zero.
Thus we expect to reach Step~9 just $O(1)$ times, and the total number of forms $(a,b,c)$ generated by the algorithm during its execution is bounded
by $\LL{1/2,1/(2\mu)}(n)$.

For each form $(a,b,c)$, the algorithm tests whether $a$ is $B$-smooth in Step~6.  Applying ECM, under Heuristic~4
we identify a $B$-smooth integer (with high probability) in time $\LL{1/2,2}(B)=\LL{1/4,\sqrt{2\mu}}(n)$ \cite{lenstra-ecm}.  
This yields
\[\LL{1/2,1/(2\mu)}(n)=\LL{1/2,1/(\sqrt{8}\mu)}(|D_1|),\]
as a bound on the expected time spent finding relations.

The bounds on $k$, the $\ell_i$, and the $e_i$ are immediate.  We may bound the cost of computing $\#R/D_j$, for $j=\{1,2\}$, by
\[O\left(2^k\log(\max e_i)\log^2|D_j|\right) = O(2^k\log^{5/2+\epsilon}|D_j|)=\LL{1/2,0}(|D_1|)\log^3|D_j|.\]
The proposition follows.
\end{proof}

\begin{corollary}\label{cor-alg1}
$(\boldsymbol{\rm H})$ Algorithm~1 has expected running time $\LL{1/2,\sqrt{3}/2}(q)$.
\end{corollary}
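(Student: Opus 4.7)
The plan is to sum the costs of the three phases of Algorithm~1 --- Schoof's algorithm together with the factorization of $4q-t^2$ (Step~1), isogeny climbing over small prime factors of $v$ (Steps~3--4), and a relation walk for each large prime factor of $v$ (Steps~5--7) --- and then tune the two free parameters: the cutoff $B = \LL{1/2,\beta}(q)$ separating small from large primes, and the smoothness parameter $\mu$ used inside \textsc{FindRelation}.

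Step~1 is easy to absorb: Schoof's point-counting runs in polynomial time, and Heuristic~4 bounds the factorization of $4q-t^2$ by $\LL{1/3,c_f}(q)$, which is $o(\LL{1/2,\epsilon}(q))$. For Steps~3--4, the dominant cost per prime $p \le B$ dividing $v$ is computing $\Phi_p/\Fq$ by the CRT method at cost $O(p^{3+\epsilon}\log q)$; the climb itself uses only $O(kp^2\log q)$ $\Fq$-operations, which is absorbed. Since at most $O(\log q)$ distinct primes divide $v$, the total is bounded by $O(B^{3+\epsilon}\log^2 q) = \LL{1/2,3\beta}(q)$.

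Steps~5--7 contain the principal work. There are at most $\log v / \log B = \LL{1/2,o(1)}(q)$ primes $p > B$ dividing $v$, so it suffices to bound one pass through the loop. By Proposition~\ref{prop-findrel}, a call to \textsc{FindRelation}$(D_1,D_2)$ with $|D_1| \le v^2|D_K| \le 4q$ runs in expected time $\LL{1/2,1/(\sqrt{8}\mu)}(q)$ (the secondary $\LL{1/2,0}(q)\log^3 q$ term is negligible) and returns a relation $R$ of arity $k = O((\log q)^{1/2})$ with norms $\ell_i \le \LL{1/2,\mu/\sqrt{2}}(q)$ and exponents $e_i \le \LL{1/2,\sqrt{2}\mu}(q)$. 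Precomputing $\Phi_{\ell_i}/\Fq$ for the $\ell_i$ occurring in $R$ costs $\LL{1/2,3\mu/\sqrt{2}}(q)$ in total, and the ensuing \textsc{CountRelation} walk costs $\sum_i 2^{i-1}e_i\ell_i^2 \le 2^k\cdot B_2^2 = \LL{1/2,\sqrt{2}\mu}(q)$ $\Fq$-operations, using $e_\ell\ell^2 \le B_2^2$ with $B_2 = \LL{1/2,\mu/\sqrt{2}}(q)$ the internal smoothness bound of \textsc{FindRelation}, together with $2^k = \LL{1/2,o(1)}(q)$.

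Among the three exponents $1/(\sqrt{8}\mu)$, $3\mu/\sqrt{2}$ and $\sqrt{2}\mu$, the last is always dominated by the second, so the optimum sits on the equation $1/(\sqrt{8}\mu) = 3\mu/\sqrt{2}$, giving $\mu = 1/\sqrt{6}$ and a bound of $\LL{1/2,\sqrt{3}/2}(q)$ per large prime. Equating this with the climbing bound $\LL{1/2,3\beta}(q)$ forces $\beta = 1/\sqrt{12}$, as announced, and yields the claimed total $\LL{1/2,\sqrt{3}/2}(q)$. The main subtlety I expect is recognising that the CRT precomputation of the modular polynomials $\Phi_\ell/\Fq$, rather than the relation search or the isogeny walk, is the limiting cost in Steps~5--7 and therefore what pins down the optimal $\mu$.
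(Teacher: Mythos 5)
Your proposal is correct and takes essentially the same approach as the paper, which simply asserts the choices $B=\LL{1/2,1/\sqrt{12}}(q)$ and $\mu=1/\sqrt{6}$ and states that isogeny climbing, $\textsc{FindRelation}$, $\textsc{CountRelation}$, and the $\Phi_\ell/\Fq$ precomputations all balance to $\LL{1/2,\sqrt{3}/2}(q)$. You supply the underlying optimization that the paper leaves implicit---in particular, correctly identifying the $O(\ell^{3+\epsilon}\log q)$ modular-polynomial precomputation, rather than the walk itself, as the term that pins down $\mu$ against the $\LL{1/2,1/(\sqrt{8}\mu)}(q)$ relation-search cost.
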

\begin{proof}
We may compute $t$ in polynomial time with Schoof's algorithm, and under Heuristic~4 we factor $4q-t^2=-v^2D_K$ 
in expected time $\LL{1/3,c_f}(q)$.

We use $B=\LL{1/2,1/\sqrt{12}}(q)$ in Algorithm~1, and set $\mu=1/\sqrt{6}$ when calling \textsc{FindRelation}.  The cost of isogeny climbing, the calls to \textsc{FindRelation}, and the calls to \textsc{CountRelation} to compute $\#R/D_\E$ all have expected complexity  $\LL{1/2,\sqrt{3}/2}(q)$, including the cost of computing the required $\Phi_\ell/\Fq$.
Only $O(\log{q})$ iterations are required in Algorithm~1 (one for each $p\mid v$), which does not change the complexity bound.
\end{proof}

\begin{corollary}\label{cor-cert}
$(\boldsymbol{\rm H})$ Let $D_1=u^2D_K$ and $D_2=v^2D_K$.   The expected running time of $\textsc{Certify}(u,v,D_K)$ is within an $O(\log{v})$ factor of the expected complexity of $\textsc{FindRelation}(D_1,D_2)$.  The output certificate $C$ has size $O(\log^{1+\epsilon}|D_1|\log{v})$.
\end{corollary}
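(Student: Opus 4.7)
The plan is to bound the expected time and the output size of \textsc{Certify} separately, reducing both to the guarantees of Proposition~\ref{prop-findrel} applied to a single reference call $\textsc{FindRelation}(D_1,D_2)$ with $D_1=u^2 D_K$ and $D_2=v^2 D_K$. The key observation is that $v$ has $O(\log v)$ distinct prime factors, so \textsc{Certify} issues at most $O(\log v)$ invocations of \textsc{FindRelation}, one per prime dividing $v$.

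For the running time, I would check that every such invocation is dominated by the reference call. The Step~2 calls use discriminants $(u^2 D_K, p^2 D_K)$ with $p\mid v/u$, so the first discriminant equals $D_1$ and the absolute value of the second is at most $|v^2 D_K|=|D_2|$. The Step~4 calls use $((u/p)^2 D_K, u^2 D_K)$ with $p\mid u$, where both discriminants have absolute value bounded by $|D_1|\leq|D_2|$. Since the expected cost in Proposition~\ref{prop-findrel} is non-decreasing in $|D_1|$ (via the $\LL{1/2,c}(|D_1|)$ factors) and in $|D_2|$ (via the $\log^3|D_2|$ factor), each invocation lies within the reference complexity. Summing over the $O(\log v)$ calls gives the stated bound.

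For the size of the certificate, I would apply the output parameters from Proposition~\ref{prop-findrel} to each $R_p$: the arity is $k=O(\log^{1/2}|D_1|)$, the norms satisfy $\ell_i\leq\LL{1/2,\mu/\sqrt{2}}(|D_1|)$, and the exponents satisfy $e_i\leq\LL{1/2,\sqrt{2}\mu}(|D_1|)$. Each $\ell_i$ and $e_i$ therefore fits in $O(\log^{1/2+\epsilon}|D_1|)$ bits, so a single relation is encoded in $O(k\cdot\log^{1/2+\epsilon}|D_1|)=O(\log^{1+\epsilon}|D_1|)$ bits. Multiplying by the $O(\log v)$ relations, and absorbing the polynomial-in-$\log v$ overhead of storing $u$, $v$, and $D_K$, yields the advertised bound $O(\log^{1+\epsilon}|D_1|\log v)$.

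The only mild subtlety is verifying the monotonicity claim for the Step~4 calls, where $|D_1|$ is itself shrunk by a factor $p^2$; here one simply notes that the expected cost stated in Proposition~\ref{prop-findrel} is non-decreasing in the size of the first discriminant, so passing to a smaller $D_1$ can only help. No new heuristic assumptions beyond those already encapsulated in Proposition~\ref{prop-findrel} are required, and the rest of the argument is essentially bookkeeping.
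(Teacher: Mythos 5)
Your proposal is correct and follows essentially the same route as the paper: bound the number of \textsc{FindRelation} calls by $O(\log v)$, observe that every call uses discriminants with $|D_1|\leq|u^2D_K|$ and $|D_2|\leq|v^2D_K|$, and then transfer the time and size bounds from Proposition~\ref{prop-findrel}. You are slightly more explicit than the paper about why each invocation in Steps~2 and~4 is dominated by the reference call, but this is just elaborating the monotonicity the paper takes as given.
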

\begin{proof}
Algorithm \textsc{Certify} makes fewer than $O(\log{v})$ calls to \text{FindRelation} with
$|D_1|\leq|u^2D_K|$ and $|D_2|\leq|v^2D_K|$.  Applying the bounds of Proposition~\ref{prop-findrel} for $\ell_i$, $e_i$, and $k$,
each relation has size $O(\log|D_1|\log\log|D_1|)$.
\end{proof}

\begin{proposition}\label{prop-verify}
$(\boldsymbol{\rm H})$ Given a certificate $C$ produced by Algorithm \textsc{Certify} with parameter $\mu$ and an
elliptic curve $\E/\Fq$, Algorithm $\textsc{Verify}(\E/\Fq,C)$ has expected running time 
\[\LL{1/2,3\mu/\sqrt{2}}(|u^2D_K|)\log^{5/2}q.\]
\end{proposition}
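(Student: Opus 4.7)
The plan is to apply Proposition~\ref{prop-findrel} to each relation $R_p$ stored in the certificate $C$ and then sum the resulting per-relation cost over the $O(\log v)=O(\log q)$ primes $p\mid v$. By construction of \textsc{Certify}, each $R_p$ was produced by $\textsc{FindRelation}(D_1,D_2)$ with $|D_1|\leq|u^2D_K|$, so Proposition~\ref{prop-findrel} gives $\ell_i\leq\LL{1/2,\mu/\sqrt{2}}(|u^2D_K|)$, $e_i\leq\LL{1/2,\sqrt{2}\mu}(|u^2D_K|)$, and arity $k<(2/\mu)\log^{1/2}|u^2D_K|$, so that $2^k=\LL{1/2,0}(|u^2D_K|)$. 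The per-relation work of \textsc{Verify} then splits into the class-group evaluations of $\#R_p/D$ for $|D|\leq|u^2D_K|$, costing $O((2^k+\sum\log e_i)\log^2|D|)=\LL{1/2,0}(|u^2D_K|)\log^{O(1)}q$ bits and hence negligible, plus the isogeny-graph computation of $\#R_p/\E$ via \textsc{CountRelation}, which will dominate.

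For the walks, I would further split the work into (i) the one-time computation of each $\Phi_{\ell_i}/\Fq$ by the CRT method, costing $O(\ell_i^{3+\epsilon}\log q)$ bit operations, and (ii) the branching walk itself, costing $\sum_i 2^{i-1}e_iT(\ell_i)$ operations in $\Fq$. With $\ell_i\leq B=\LL{1/2,\mu/\sqrt{2}}(|u^2D_K|)$, step (i) contributes at most $\LL{1/2,3\mu/\sqrt{2}}(|u^2D_K|)\log^{O(1)}q$ bits per relation. For (ii), the crucial observation is that the constraint $|x_\ell|\leq(B/\ell)^2$ baked into \textsc{FindRelation} yields the \emph{joint} bound $e_\ell T(\ell)=O(B^2)$ operations in $\Fq$, so the branching sum is at most $2^k B^2=\LL{1/2,\sqrt{2}\mu}(|u^2D_K|)$ field operations; since $3\mu/\sqrt{2}>\sqrt{2}\mu$, step (i) dominates (ii). Summing over the $O(\log q)$ primes $p\mid v$ and converting field operations to bit operations via an $O(\log^{1+\epsilon}q)$ factor absorbs all remaining polylogarithmic factors into $\log^{5/2}q$, yielding the stated bound.

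The main obstacle, I expect, is the analysis of (ii): one must exploit the joint bound $e_\ell T(\ell)=O(B^2)$ designed into \textsc{FindRelation} rather than multiply separate maxima of $e_\ell$ and $T(\ell)$, which would give the weaker exponent $2\sqrt{2}\mu>3\mu/\sqrt{2}$ and break the claim. Once that estimate is in place, the remainder is routine bookkeeping in $L$-notation, using that $\LL{1/2,0}\cdot\log^{O(1)}q$ is absorbed by any $\LL{1/2,c}$ with $c>0$.
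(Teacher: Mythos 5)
Your proof is correct and follows essentially the same decomposition as the paper's: the cost of computing the $\Phi_\ell/\Fq$ (cubic in the norm bound $\LL{1/2,\mu/\sqrt{2}}(|u^2D_K|)$) dominates, while the class-group evaluations and the \textsc{CountRelation} walks contribute only $\LL{1/2,\sqrt{2}\mu}(|u^2D_K|)$ up to polylogarithmic factors. The one place where you go beyond what the paper spells out is in justifying the $\LL{1/2,\sqrt{2}\mu}$ bound on the walk cost: the paper simply asserts it, whereas you correctly observe that it requires the joint bound $e_\ell T(\ell)=O(B^2)$ coming from the constraint $|x_\ell|\leq(B/\ell)^\omega$ inside \textsc{FindRelation}, rather than multiplying the separate maxima of $e_i$ and $T(\ell_i)$ from Proposition~\ref{prop-findrel} (which would give the larger exponent $2\sqrt{2}\mu>3\mu/\sqrt{2}$ and spoil the claim). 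This is a genuinely useful clarification of a step the paper leaves implicit.
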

\begin{proof}
The expected time to compute $\Phi_\ell/\Fq$ is $O(\ell^{3+\epsilon}\log^{1+\epsilon}{q})$ \cite{bls-crt-modpoly,enge-modular}.
By Proposition~\ref{prop-findrel}, each relation in the certificate contains $O(\log^{1/2}|D_1|)$ distinct $\ell_i$,
each bounded by $\LL{1/2,\mu/\sqrt{2}}(|u^2D_K|)$.  There are at most $O(\log{q})$ relations
in the certificate, yielding a total time of
\[\LL{1/2,3\mu/\sqrt{2}}(|u^2D_K|)\log^{5/2}{q},\]
to compute all the $\Phi_\ell/\Fq$ needed for verification.
The total cost of all calls to \textsc{CountRelation} may be bounded by
\[\LL{1/2,\sqrt{2}\mu}(|u^2D_K|)\log^{2+\epsilon}{q},\]
using fast multiplication in $\Fq$, which is dominated by the bound above.
\end{proof}

To balance the costs of verification and certification, one uses $\mu=1/\sqrt{6}$.  The verification time
may be reduced (and the certification time increased) by choosing a smaller $\mu$.
For example, with $\mu=1/\sqrt{18}$ the verification time is $\LL{1/2,1/2}(|u^2D_K|)$
and the certification time is $\LL{1/2,3/2}(|u^2D_K|)$, ignoring logarithmic factors in $q$.

\begin{proposition}\label{prop-alg2}
$(\boldsymbol{\rm H})$ Algorithm~2 has expected running time
\[\LL{1/2+o(1),1}(|D_\E|) + \LL{1/3,c_f}(q).\]
\end{proposition}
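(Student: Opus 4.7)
The plan is to decompose the running time of Algorithm~2 into three parts: (i) the initial factorization in Step~1, (ii) the aggregate cost of isogeny climbing in Step~5 across all iterations, and (iii) the aggregate cost of Certify and Verify calls in Step~6 across all iterations. Part~(i) contributes $\LL{1/3, c_f}(q)$ by Heuristic~4 (Schoof's algorithm costs only $\mathrm{poly}(\log q)$). The goal is then to bound parts (ii) and (iii) by $\LL{1/2+o(1), 1}(|D_\E|)$, absorbing any residual polylogarithmic-in-$q$ factors into $\LL{1/3, c_f}(q)$ when $|D_\E|$ is small compared to $q$.

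First I would bound the final value $w^*$ of $w$ at termination. Once $w$ exceeds $1/3$, Step~3 gives $w_{i+1} = w_i + 1/\log q$, so the algorithm performs $O(w^*\log q)$ iterations in total. Termination occurs once Verify succeeds on the correct divisor, which happens as soon as the portion of $u_\E$ not already stripped by isogeny climbing falls below $\exp(\log^{2w} v)$. It suffices to take $w$ large enough that $\exp(\log^{2w} v) \geq u_\E$, yielding $w^* \leq \frac{\log\log u_\E}{2\log\log v} + o(1)$. In particular, $\log^{w^*} v \leq \sqrt{\log u_\E}$, the number of iterations is $O(\log q)$, and $U^* := \exp(\log^{2w^*} v) \leq u_\E$ in the final iteration.

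For iteration $i$, set $B_i := \exp(\log^{w_i} v)$. Isogeny climbing processes the $O(\log v)$ primes $p \mid v$ with $p < B_i$, each in time $O(p^{3+\epsilon}\log q)$ (dominated by computing $\Phi_p/\Fq$), for a per-iteration cost of $O(B_i^{3+\epsilon}\log v\log q)$. After climbing, the updated $v$ has no prime factor below $B_i$, so its divisors less than $B_i^2$ are simply $1$ and the primes $p \mid v$ in $[B_i, B_i^2]$, yielding at most $O(\log v)$ candidates. For each candidate $u \leq U_i$, choosing $\mu = 1/\sqrt{6}$ to balance Certify against Verify, Corollary~\ref{cor-cert} (via Proposition~\ref{prop-findrel}) and Proposition~\ref{prop-verify} together bound the Certify/Verify cycle by $\LL{1/2,\sqrt{3}/2}(u^2|D_K|) \cdot \mathrm{poly}(\log q)$.

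Since both parts of the cost grow monotonically with $w_i$, the total over $O(\log q)$ iterations is at most an $O(\log q)$ factor times the final iteration's cost. There, $B_{w^*}^{3+\epsilon} \leq \exp((3+\epsilon)\sqrt{\log u_\E}) = \LL{1/2, o(1)}(u_\E)$ and each Certify/Verify cycle is at most $\LL{1/2, \sqrt{3}/2}(|D_\E|) \cdot \mathrm{poly}(\log q)$, since $u \leq u_\E$ and $|D_\E| = u_\E^2|D_K|$. These subexponential contributions fit into $\LL{1/2+o(1), 1}(|D_\E|)$ whenever $|D_\E|$ is not too small relative to $q$; any leftover $\mathrm{poly}(\log q)$ factors are absorbed into $\LL{1/3, c_f}(q)$ otherwise. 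The main obstacle is establishing the bound on $w^*$: one has to check that at the iteration picked out by this estimate the correct residual divisor of $u_\E$ is actually among those examined by Step~6, and that Verify returns \texttt{true} on it under Heuristic~3.
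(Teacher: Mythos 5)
Your overall decomposition of the running time (factorization, isogeny climbing, certification/verification) matches the paper's, and your closing remark about absorbing residual $\operatorname{poly}(\log q)$ factors into either term is exactly how the paper concludes. But there is a genuine error in the middle of your argument that quietly erases the source of the $\LL{1/2+o(1),1}(|D_\E|)$ term.

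You claim that after isogeny climbing in iteration $i$, since the updated $v$ has no prime factors below $B_i=\exp(\log^{w_i} v)$, the divisors of $v$ examined in Step~6 are ``simply $1$ and the primes $p\mid v$ in $[B_i,B_i^2]$, yielding at most $O(\log v)$ candidates.'' That would be correct if the divisor bound in Step~6 were $B_i^2=\exp(2\log^{w_i} v)$. But Step~6 enumerates divisors up to $\exp(\log^{2w_i} v)=\exp\bigl((\log^{w_i} v)^2\bigr)=B_i^{\log^{w_i} v}$, which is enormously larger than $B_i^2$ in the relevant regime. A squarefree divisor below that threshold can be a product of up to roughly $\log^{w_i} v$ primes each $\geq B_i$; since about $\log^{1-w_i} v$ such primes are available, the count is of order $\binom{\log^{1-w_i} v}{\log^{w_i} v}$, which the paper bounds by $(\log^{1-w_i}v)^{\log^{w_i}v}=\LL{w_i,1}(v)$. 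This binomial count is precisely where the $\LL{1/2+o(1),1}(|D_\E|)$ bound comes from (after the change of variables from $v$ to $|D_\E|$ using the lower bound on $u$ inherited from the previous iteration); with only $O(\log v)$ divisors one would get a bound with constant $\sqrt 3/2$ rather than $1$ in the $\alpha=1/2$ slot, not a bound at $\alpha=1/2+o(1)$, and there would be nothing to explain the paper's exponent. So your reasoning lands on the right final expression by coincidence while skipping over the key combinatorial estimate.

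Two minor further points. First, it is not true that $U^\ast:=\exp(\log^{2w^\ast}v)\leq u_\E$ at termination; rather $U^\ast\geq u_\E$, and the reason one may still bound each Certify/Verify cycle by $\LL{1/2,\sqrt 3/2}(|D_\E|)\operatorname{poly}(\log q)$ is that divisors are tested in increasing order and the loop exits upon finding the true conductor, so no $u>u_\E$ is ever tested. Second, the obstacle you flag at the end --- that the correct residual divisor is among those examined and Verify accepts it --- is handled by exactly this ordering, combined with Heuristic~3 (which powers \textsc{FindRelation} inside \textsc{Certify}) and the unconditional correctness of \textsc{Verify}; this should be stated, not deferred.
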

\begin{proof}
In Step~1 we compute $t$ in polynomial time and factor $-v^2D_K$ in expected time $\LL{1/3,c_f}(q)$, by Heuristic~4.
Let $\mu=1/\sqrt{6}$ in all the calls to \textsc{Certify}, in order to balance the cost of \textsc{Verify}.
The cost of each certification/verification performed in Step~7 is then bounded by $\LL{1/2,\sqrt{3}/2}(|D_\E|)\log^{5/2}{q}$,
according to Proposition~\ref{prop-verify},
since we never test a divisor of $v$ that is greater than the conductor $u$ of $D_\E$.  In Step~6, $v$ can contain
no prime factors less than $\exp(\log^w{v})$.  Thus the number of divisors is bounded by
\[
\binom{\log^{1-w}v}{\log^{2w-w}v}\leq
\left(\log^{1-w}v\right)^{\log^wv}=\LL{w,1}(v)=\LL{1/2+o(1),1}(|D_\E|)\text{.}
\]
In the rightmost equality we have used
\begin{equation}\label{equation-switch}
\log|D_\E|>\log{u}\geq \log^{2w-1/\log{q}}v \quad \Rightarrow \quad \log{v}\leq\log^{1/(2w-1/\log{q})}|D_\E|
\end{equation}
to express the bound in terms of $|D_\E|$, noting that
\[
w/(2w-1/\log{q}) = 1/2 + 1/(4w\log{q}-2)\text{,}
\]
where $w\geq 1/3$ and $q\to\infty$ as $|D_\E|\to\infty$.
The cost of Step~7 for all the divisors considered in a single
execution of Step~6 is bounded by $\LL{1/2+o(1),1}(|D_\E|)\log^{5/2}{q}$.  The algorithm may repeat
Step~6 up to $\log{q}$ times, but the cost of each iteration dominates all prior ones, so we have
bounded the total cost of Step~7.

The cost of isogeny climbing in Step~5 during the first iteration is bounded by
\[\exp\left(\left(3+o(1)\right)\log^{1/3}{v}\right)\log^{2+\epsilon}q=\LL{1/3,c_f}(q)\]
(for any $c_f$), and thereafter cannot exceed
\[\exp\left((3+\epsilon)\log^w{v}\right)\log^{2+\epsilon}{q}=\LL{w,1}(v)\log^3{q}=\LL{1/2+o(1),1}(|D_\E|)\log^{2+\epsilon}{q}.\]
Here we have again applied (\ref{equation-switch}), and the choice of the constant 1 (or any constant)
is justified by the fact that $3/(\log\log|D_\E|)^{1-w}\to 0$ as $|D_\E|\to\infty$.

To complete the proof, we note that if $\LL{1/2+o(1),1}(|D_\E|)\log^{5/2}{q}$ exceeds $\LL{1/3,c_f}(q)$
we may incorporate the $\log^{5/2}{q}$ factor into the $o(1)$ term.  Otherwise, the complexity is $\LL{1/3,c_f}(q)$, and the proposition holds in either case.
\end{proof}

In both Algorithms~1 and 2, the space is dominated by the size of the polynomials $\Phi_\ell/\Fq$. 
As noted in Section~\ref{section:explicit}, these can be computed in $O(\ell^{2+\epsilon}\log{q})$ space \cite{bls-crt-modpoly}.
Plugging in parameters from the complexity analysis above, and making the same heuristic assumptions,
we obtain an $\LL{1/2,1/\sqrt{3}}(q)$ space bound for Algorithm~1, and an $\LL{1/2+o(1),2/3}(|D_\E|)\log{q}$
space bound for Algorithm~2.

\section{Examples}\label{section:examples}

The rough timings we give here were achieved by a simple implementation
running on a single 2.4GHz Intel Q6600 core.  The algorithm \textsc{FindRelation} was implemented using the GNU C/C++ compiler \cite{gnu-c} and the GMP library \cite{gmp}, and for \textsc{CountRelation} we used a PARI/GP script \cite{pari-gp}.
We did not attempt to maximize performance, our purpose was simply
to demonstrate the practicality of the algorithms on some large inputs.
In a more careful implementation, constant factors would be substantially
improved and many steps could be parallelized.

\subsection{First Example}

We consider the elliptic curve $\E/\Fq$ with Weierstrass equation
$Y^2 = X^3 - 3X + c_\E$, where
\[\begin{array}{c}
c_\E=660897170071025494489036936911196131075522079970680898049528;
\\
q=1606938044258990275550812343206050075546550943415909014478299.
\end{array}\]
Its trace $t=212$ is computed by the Schoof--Elkies--Atkin algorithm in a few seconds
and, factoring $4q-t^2$, it is nearly instantaneous to retrieve $D_K=-7$ and
\[
v=2\cdot 127\cdot \underbrace{524287}_{p_1}\cdot \underbrace{7195777666870732918103}_{p_2}.
\]

Let us apply Algorithm~1 to compute the conductor $u$ of $\OrdE$.
First, we use isogeny climbing to handle small prime factors $p$ of $v$,
those for which $\Phi_p$ can be computed in reasonable time
(or, more likely, have already been precomputed); here, this means $2$ and $127$.
It takes roughly 20 seconds to compute $\Phi_{127}$ and isogeny climbing itself takes less than 2 seconds.
We find none of these primes divide $u$; hence $\E'=\E$ and we may now assume $v=p_1p_2$.

\medskip

For $p_1$ we set $D_1=(v/p_1)^2D_K$ and $D_2=p_1^2D_K$ as in Corollary~\ref{cor-piff}.
To find a relation satisfying this corollary, we use Algorithm $\textsc{FindRelation}(D_1,D_2)$
with the bound $B=500$.  Corollary~\ref{cor-alg1} uses $B=\LL{1/2,1/\sqrt{12}}(q)\approx 1900$,
but, taking into account constant factors in the complexity estimates, we find
experimentally that $B=500$ better balances the expected running time of \textsc{FindRelation} with that of computing $\#R/D_\E$.
The iteration bound $2m(B,n)=6\cdot 10^7$ has been evaluated via $m(B,n)=1/\rho(u)$ with $u=\log n/\log B\approx 8$ using
Table~1 of \cite{granville}, computed by Bernstein.

After about 20 minutes, \textsc{FindRelation} outputs the relation $R$ with
\[(\ell_i^{e_i})=(2^{2533},11^{752},29^2,37^{47},79^1,113^1,149^1,151^2,347^1,431^1),\]
for which $\#R/D_1=2$ and $\#R/D_2=0$.
Note that, as suggested by Footnote~\ref{small-div-v},
we make use of $\ell=2$ even though it divides $v$ (using the algorithm in \cite[Sec.~4.2]{sutherland-cm}).
Now, to evaluate $\#R/D_\E$ using Algorithm $\textsc{CountRelation}(\E,R)$,
we need to compute the required modular polynomials.
We use precomputed $\Phi_\ell$ for $\ell<100$, and for $\ell\geq 100$ apply the algorithm in \cite{bls-crt-modpoly};
$\Phi_{431}$ takes 5 minutes, $\Phi_{347}$ takes 3 minutes, and the others take less than a minute each.
Finally, $\#R/D_\E=0$ is evaluated in 6.5 minutes.  Since $\#R/D_\E < \#R/D_1$, we conclude from Corollary~\ref{cor-piff} that $p_1$ is a factor of $u$.

We now turn to $p_2$ and set $D_1=(v/p_2)^2D_K$ and $D_2=p_2^2D_K$
accordingly. The relation $R=(2^{23},11^5,43^1,71^2)$ is found almost instantly
by $\textsc{FindRelation}(D_1,D_2)$,
and we have $\#R/D_1=2$ and $\#R/D_2=0$.
$\textsc{CountRelation}(\E,R)$
computes $\#R/D_\E=2$ in 1.5 seconds,
proving that $p_2\nmid u$ (since $\#R/D_\E \nless \#R/D_1$).

\medskip

All in all, we have found the conductor $u=p_1$ of the elliptic curve $\E$
defined over a $200$-bit prime field in only slightly more than half an hour of computation.
The sizes of the primes $p_1$ and $p_2$
represents nearly a worst-case; if $p_2$ was 5 or 6 bits larger the remaining part of $v$ would be small enough
that one could more efficiently use a combination of isogeny climbing and Hilbert class polynomials to
determine $u$.

We note that, in this example, we could have used the invariant $\gamma_2=j^{1/3}$ (or other more favorable invariants \cite{bls-crt-modpoly,enge-modular}) in place of $j$,
allowing us to use modular polynomials in place of $\Phi_\ell$ that can be more quickly computed.
Doing so would let us increase the bound $B$ (reducing the time to find relations), and lead to an overall improvement in the running time.

\subsection{Second Example}

Consider now the elliptic curve $\E:Y^2 = X^3 - 3X + c_\E$
defined over the $255$-bit prime field $\Fq$ where
\[\begin{array}{rl}
c_\E= &
14262957895783764742987524732821199570\backslash\\
& 860243293007735537575027051453663494306;\\
q= &
50272551883931021408091448710235646749\backslash\\
& 904660980498576680086699865431843568847.\\
\end{array}\]
As above, we compute its trace $t=1200$ via the SEA algorithm
in about 10 seconds, and an easy factorization yields $D_K=-7$ and
\[
v=2\cdot 127\cdot
\underbrace{582509}_{p_1}\cdot
\underbrace{582511}_{p_2}\cdot
\underbrace{852857}_{p_3}\cdot
\underbrace{2305843009213693951}_{p_4}.
\]

Let us run Algorithm~2 to compute $\OrdE$.
We start with $w=1/3$, and first remove the
prime factors of $v$ less than $\exp(\log^{1/3}v)\approx 85$. As in Example~1, the constant factors make this a slight underestimate, and we are happy to increase this bound to include both 2 and 127, which we handle by isogeny climbing.
We find that neither of these divide $u$, and therefore $\E'=\E$, so we now assume that $v=p_1p_2p_3p_4$.

We then reach Step~6 and consider divisors $u$ of $v$
less than $\exp(\log^{2w}v)\approx 4\cdot 10^{8}$, namely $p_1$, $p_2$ and $p_3$.
Starting with $u\leftarrow p_1$, the certificate $C$ generated in Step~7
by $\textsc{Certify}(u,v,D_K)$ consists of
\[
R_{p_4}=R_{p_3}=R_{p_2}=(2^{41}, 11^{31}, 37^1) \text{ and } R_{p_1}=(11^1),
\]
and takes negligible time to compute.
The call to $\textsc{Verify}(\E/\Fq,C)$ takes one second and returns {\tt false},
proving that $u\neq p_1$.

Turning to $u=p_2$, $\textsc{Certify}(u,v,D_K)$ quickly outputs the certificate
\[
R_{p_4}=R_{p_3}=R_{p_1}=(2^{85}, 11^2, 23^5, 29^3) \text{ and } R_{p_2}=(11^1)
\]
and $\textsc{Verify}(\E/\Fq,C)$ returns {\tt false}
after 1.5 seconds of computation; so $u\neq p_2$.

We next consider $u=p_3$; the certificate used is
\[
R_{p_4}=R_{p_2}=R_{p_1}=(2^{239}, 11^1, 37^3) \text{ and }
R_{p_3}=(11^1).
\]
Computing and verifying this certificate takes about a second, and in this
case the verification succeeds, proving that $u=p_3$.

The total running time is less than 15 seconds, most of which is spent point-counting.
For comparison, it takes $\textsc{FindRelation}(p_2^2D,p_1^2D)$ nearly five minutes to output a relation, followed by a twenty-minute computation to evaluate its cardinality, 
demonstrating the advantage of Algorithm~2 over Algorithm~1 in this example.


\section*{Acknowledgments}

The authors express their gratitude to
Pierrick Gaudry, Takakazu Satoh, Daniel J. Bernstein and Tanja Lange
for their careful reading and helpful comments on an early version of this paper.

\bibliographystyle{amsplain}
\providecommand{\bysame}{\leavevmode\hbox to3em{\hrulefill}\thinspace}
\providecommand{\MR}{\relax\ifhmode\unskip\space\fi MR }
\providecommand{\MRhref}[2]{%
  \href{http://www.ams.org/mathscinet-getitem?mr=#1}{#2}
}
\providecommand{\href}[2]{#2}


\end{document}